\definecolor{labelkey}{gray}{.8}
\definecolor{refkey}{gray}{.8}
\definecolor{darkred}{rgb}{0.9,0.1,0.1}
 \newcounter{enunciato}[section]
 \newtheorem{ittheorem}{Theorem}
 \newtheorem{itlemma}{Lemma}
 \newtheorem{itproposition}{Proposition}
 \newtheorem{itcorollary}{Corollary}
 \newtheorem{itdefinition}{Definition}
 \newtheorem{itremark}{Remark}
 \newtheorem{itclaim}{Claim}
 \newtheorem{itfact}{Fact}
 \newtheorem{itconjecture}{Conjecture}
 \newenvironment{theorem}{\addtocounter{enunciato}{1}
 \begin{ittheorem}}{\end{ittheorem}}
 \newenvironment{lemma}{\addtocounter{enunciato}{1}
 \begin{itlemma}}{\end{itlemma}}
 \newenvironment{proposition}{\addtocounter{enunciato}{1}
 \begin{itproposition}}{\end{itproposition}}
 \newenvironment{corollary}{\addtocounter{enunciato}{1}
 \begin{itcorollary}}{\end{itcorollary}}
 \newenvironment{definition}{\addtocounter{enunciato}{1}
 \begin{itdefinition}}{\end{itdefinition}}
 \newenvironment{remark}{\addtocounter{enunciato}{1}
 \begin{itremark}}{\end{itremark}}
 \newenvironment{claim}{\addtocounter{enunciato}{1}
 \begin{itclaim}}{\end{itclaim}}
 \newenvironment{fact}{\addtocounter{enunciato}{1}
 \begin{itfact}}{\end{itfact}}
 \newenvironment{conjecture}{\addtocounter{enunciato}{1}
 \begin{itconjecture}}{\end{itconjecture}}
 \newcommand{\be}[1]{\begin{equation}\label{#1}}
 \newcommand{\ee}{\end{equation}}
 \newcommand{\bl}[1]{\begin{lemma}\label{#1}}
 \newcommand{\el}{\end{lemma}}
 \newcommand{\br}[1]{\begin{remark}\label{#1}}
 \newcommand{\er}{\end{remark}}
 \newcommand{\bt}[1]{\begin{theorem}\label{#1}}
 \newcommand{\et}{\end{theorem}}
 \newcommand{\bd}[1]{\begin{definition}\label{#1}}
 \newcommand{\ed}{\end{definition}}
 \newcommand{\bcl}[1]{\begin{claim}\label{#1}}
 \newcommand{\ecl}{\end{claim}}
 \newcommand{\bfact}[1]{\begin{fact}\label{#1}}
 \newcommand{\efact}{\end{fact}}
 \newcommand{\bp}[1]{\begin{proposition}\label{#1}}
 \newcommand{\ep}{\end{proposition}}
 \newcommand{\bc}[1]{\begin{corollary}\label{#1}}
 \newcommand{\ec}{\end{corollary}}
 \newcommand{\bcj}[1]{\begin{conjecture}\label{#1}}
 \newcommand{\ecj}{\end{conjecture}}
 \newcommand{\bpr}{\begin{proof}}
 \newcommand{\epr}{\end{proof}}
 \newcommand{\bprlem}[1]{\begin{proofof}{\it Lemma \ref{#1}}.\,\,}
 \newcommand{\eprlem}{\end{proofof}}
 \newcommand{\bprthm}[1]{\begin{proofof}{\it Theorem \ref{#1}}.\,\,}
 \newcommand{\eprthm}{\end{proofof}}
 \newcommand{\bprprop}[1]{\begin{proofof}{\it Proposition \ref{#1}}.\,\,}
 \newcommand{\eprprop}{\end{proofof}}
 \newcommand{\bi}{\begin{itemize}}
 \newcommand{\ei}{\end{itemize}}
 \newcommand{\ben}{\begin{enumerate}}
 \newcommand{\een}{\end{enumerate}}
 \newenvironment{proofof}{\noindent {\em Proof of\,\,}}{\hspace*{\fill}$\halmos$\medskip}
 \newcommand{\halmos}{\rule{1ex}{1.4ex}}
 \newcommand{\one}{{\mathchoice {1\mskip-4mu\mathrm l}
         {1\mskip-4mu\mathrm l}
         {1\mskip-4.5mu\mathrm l}
         {1\mskip-5mu\mathrm l}}}
\def \E {{\mathbb E}}
\def \N {{\mathbb N}}
\def \P {{\mathbb P}}
\def \R {{\mathbb R}}
\def \Z {{\mathbb Z}}
\def \lra \leftrightarrow
\def \ra {\rightarrow}
\def \ba {\begin{array}}
\def \ea {\end{array}}
\def \lra {\longrightarrow}
\def \lra {{\leftrightarrow}}
\def \subset {\subseteq}
\def \emptyset {\varnothing}
\def\one{\rlap{\mbox{\small\rm 1}}\kern.15em 1}
\newlength{\dhatheight}
 \def \cmmvy {{\bar{c}}}
\def \csv {{\hat{c}}}
\def \bfgn {{\bf G}_n}   
\begin{document}
\title{Exponential rate for the contact process extinction time}
\author{Bruno Schapira\textsuperscript{1}, Daniel Valesin\textsuperscript{2}}
\footnotetext[1]{Aix-Marseille Universit\'e, CNRS, Centrale Marseille, I2M, UMR 7373, 13453 Marseille, France.\\ \url{bruno.schapira@univ-amu.fr}}
\footnotetext[2]{\noindent University of Groningen, Nijenborgh 9, 9747 AG Groningen, The Netherlands.\\ \url{d.rodrigues.valesin@rug.nl}}

\maketitle
\begin{abstract}
We consider the extinction time of the contact process on increasing sequences of finite graphs obtained from a variety of random graph models. Under the assumption that the infection rate is above the critical value for the process on the integer line, in each case we prove that the logarithm of the extinction time divided by the size of the graph converges in probability to a (model-dependent) positive constant. The graphs we treat include various percolation models on increasing boxes of~$\mathbb{Z}^d$ or~$\mathbb{R}^d$ in their supercritical or percolative regimes (Bernoulli  bond and site percolation, the occupied and vacant sets of random interlacements, excursion sets of the Gaussian free field, random geometric graphs) as well as supercritical Galton-Watson trees grown up to finite generations.
\end{abstract}

\section{Introduction}
In this paper, continuing our earlier work \cite{SV}, we present a robust method allowing to prove the existence of an exponential rate of convergence of the contact process extinction time on various models of random graphs, when the infection parameter is large enough. 

In the contact process, each vertex of a graph is at any point in time either healthy (state~0) or infected (state~1). The continuous-time dynamics is defined by the specification that infected vertices become healthy with rate one, and transmit the infection to each neighboring vertex with rate $\lambda > 0$. We refer to \cite{Lig} for a standard reference on the contact process.

The configuration in which all vertices are healthy is absorbing for the dynamics, and in finite graphs it is reached with probability one. In certain situations, the dynamics stays active for a very long time before reaching this absorbing state, and this behavior can be understood as an instance of metastability. To be more precise, let~$\uptau_G$ be the hitting time of the absorbing, ``all-healthy'' (or ``empty'') state when the process is started from the configuration in which all vertices are infected. The distribution of~$\uptau_G$ depends on the graph~$G$ and the infection rate~$\lambda$. One typically considers a sequence of graphs~$(G_n)$ (which could be deterministic and nested, or an increasing sequence of random graphs from the same random graph model), fixes~$\lambda > 0$ and studies the asymptotic behavior of~$\uptau_{G_n}$. For a variety of sequences~$(G_n)$, it is known that, if~$\lambda$ is large enough, then there exists $c > 0$ such that
\begin{equation}\label{eq:at_least_exp}
\mathbb{P}\left(\uptau_{G_n} > \exp\{c|G_n|\}\right)\xrightarrow{n \to \infty} 1,
\end{equation}
where for any graph $G$ we let $|G|$ denote its number of vertices (see for instance Theorem~3.9 in Section~I.3 of~\cite{Lig}, Theorem~1.3 in~\cite{MMVY} and Theorem~1.5 in~\cite{CMMV13}). Evidently, the meaning of ``$\lambda$ large enough'' depends on the chosen sequence of graphs. For instance, for certain random graphs with power law degree distributions, $\lambda > 0$ suffices, whereas for boxes of $\mathbb{Z}^d$, one must take $\lambda$ larger than $\lambda_c(\mathbb{Z}^d)$, the critical value for the $d$-dimensional contact process, defined as the supremum of values of~$\lambda$ for which the process on~$\mathbb{Z}^d$ started from a single infection almost surely reaches the empty configuration.
Recently, in \cite{MMVY} and \cite{SV}, it has been established that for values of the infection rate above $\lambda_c(\mathbb{Z})$ -- the critical value of the one-dimensional contact process --, statement~\eqref{eq:at_least_exp} (or at least a slightly weaker result in which~$\exp\{c|G_n|\}$ is replaced by~$\exp\{c|G_n|/(\log |G_n|)^{1+\varepsilon}\}$) holds for \textit{arbitrary} sequences of connected graphs~$(G_n)$ with~$|G_n| \to \infty$.

A natural refinement of~\eqref{eq:at_least_exp} is the statement that $\frac{1}{|G_n|}\cdot \log \uptau_{G_n}$ converges (in some sense) to something positive as~$n \to \infty$. The first result of this kind was given in~\cite{DS88}: there it is proved that for~$\lambda > \lambda_c(\mathbb{Z})$ and $(G_n)$ given by line segments of length~$n$, there exists a constant~$\gamma \in (0,\infty)$ such that
\begin{equation}\label{eq:conv_in_prob}
\frac{1}{|G_n|}\cdot \log(\uptau_{G_n}) \xrightarrow{n \to \infty} \gamma \text{ in probability}.
\end{equation}
This was generalized in~\cite{mo99} to boxes of~$\mathbb{Z}^d$ (with~$\lambda > \lambda_c(\mathbb{Z}^d)$), and the same result was proved in~\cite{CMMV13} for~$d$-regular trees truncated at height~$n$ (with~$\lambda$ larger than the \textit{upper} critical value of the contact process on the infinite~$d$-regular tree; see~\cite{Lig} for the definition).

Our goal in this paper is to establish results of the type~\eqref{eq:conv_in_prob} for sequences~$({\bf G}_n)$ obtained from random graph models. As we rely on techniques developed in our previous work~\cite{SV}, where the assumption~$\lambda > \lambda_c(\mathbb{Z})$ was crucial, we also need this assumption here, though we do not believe it to be sharp in any of the settings we consider. This being said, our method is quite robust, and depends essentially on the graphs ${\bf G}_n$ possessing some kind of self-similar structure which allows for a recursive decomposition. 

Let us list the choices of sequences of random graphs~$({\bf G}_n)$ covered by our main theorem. For now we only refer to these random graph models by their names and the assumptions we place on their defining parameters; in Sections~\ref{s:percolation} and~\ref{s:Galton-Watson}, we will present each model in detail. We abuse notation and denote by~$B_n$ both the set~$\{-n,\ldots,n-1\}^d \subset \mathbb{Z}^d$ and the set~$[-n,n]^d \subset \mathbb{R}^d$.
\begin{enumerate}
\item For $d \geq 2$, perform supercritical Bernoulli bond percolation on~$B_n$ and let~${\bf G}_n$ be the resulting maximal component (that is, the connected component with largest cardinality).
\item For~$d \geq 3$, perform supercritical Bernoulli site percolation on $B_n$ and let~${\bf G}_n$ be the resulting maximal component. 
\item For~$d \geq 3$ and~$u > 0$, let~$\mathcal{I}_u$ be the occupied set of random interlacement with intensity~$u$ on~$\mathbb{Z}^d$; let~${\bf G}_n$ be the maximal component of the subgraph of~$\mathbb{Z}^d$ induced by~$\mathcal{I}_u \cap B_n$.
\item For~$d \geq 3$ and~$u > 0$ sufficiently small, let~$\mathcal{V}_u$ be the vacant set of random interlacement with intensity~$u$ on~$\mathbb{Z}^d$; let~${\bf G}_n$ be the maximal component of the subgraph of~$\mathbb{Z}^d$ induced by~$\mathcal{V}_u \cap B_n$.
\item For~$d \geq 3$, let~$\varphi = (\varphi_x)_{x \in \mathbb{Z}^d}$ be the Gaussian free field on~$\mathbb{Z}^d$. Let~$E_\varphi^{\geq h} = \{x \in \mathbb{Z}^d:\;\varphi_x \geq h\}$ for~$h \in \mathbb{R}$ small enough, and let~${\bf G}_n$ be the maximal component of the subgraph of~$\mathbb{Z}^d$ induced by~$E_\varphi^{\geq h} \cap B_n$.
\item For~$d \geq 2$, consider the supercritical random geometric graph on~$B_n$ and let~${\bf G}_n$ be the resulting maximal component.
\item Let~$\nu$ be a probability measure on~$\mathbb{N}$ with $\sum_k k\nu(k) > 1$ and $\sum_k k^2 \nu(k) < \infty$. Let~${\bf G}$ be a Galton-Watson tree with offspring distribution~$\nu$ and either conditioned on being infinite, or conditioned on reaching generation $n$. Let~${\bf G}_n$ be the subgraph of~${\bf G}$ induced by the set of vertices at distance at most~$n$ from the root. 
\end{enumerate}
\begin{theorem}\label{thm:thm}
For any~$\lambda > \lambda_c(\mathbb{Z})$ and any of the choices of $({\bf G}_n)$ listed above, there exists a (deterministic) constant~$\gamma \in (0,\infty)$ such that the extinction time of the contact process on ${\bf G}_n$ with rate~$\lambda$ and started from full occupancy satisfies:
$$\frac{1}{|{\bf G}_n|} \cdot \log \uptau_{{\bf G}_n} \xrightarrow{n \to \infty} \gamma \text{ in probability.} $$
\end{theorem}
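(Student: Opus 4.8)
The plan is to derive the theorem, for all the models on the list, from one recursive scheme. Fix a large scale $m$; for $n$ a multiple of $m$ (general $n$ is handled by monotonicity) decompose ${\bf G}_n$ into $k_n=(n/m)^d$ essentially independent ``copies'' of ${\bf G}_m$, establish an approximate multiplicativity of the extinction time along this decomposition, let $n\to\infty$, and only afterwards let $m\to\infty$. Write $\gamma_n:=|{\bf G}_n|^{-1}\,\mathbb{E}[\log(1+\uptau_{{\bf G}_n})]$. The first ingredient, in which the model-by-model analysis is concentrated, is a renormalisation statement: with probability tending to one, ${\bf G}_n$ contains the maximal clusters $H_1,\dots,H_{k_n}$ of the $k_n$ translates of $B_m$ tiling $B_n$; these clusters are joined to one another inside ${\bf G}_n$ by paths of bounded length; each $H_i$ is \emph{well-spreading}, meaning that for $\lambda>\lambda_c(\mathbb{Z})$ the infection, once present in $H_i$, fills $H_i$ within time $O(\mathrm{diam}\,H_i)$ with high probability; and $k_n|{\bf G}_m|/|{\bf G}_n|\to1$ in probability (a law of large numbers for the density of the relevant cluster or percolative set). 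Such facts are available in the literature, in the supercritical or strongly percolative regimes, for each of Bernoulli bond and site percolation, random interlacements and their vacant set, GFF level sets, and the random geometric graph; assembling them is part of the work and accounts for the exact list of models. In the Galton--Watson case the analogue is that ${\bf G}_n$ contains, rooted at the generation-$(n-m)$ vertices whose descendant subtree reaches relative depth $m$, a family of disjoint copies of ${\bf G}_m$ whose number, divided by $\mathbb{E}[Z_{n-m}]$, converges to $q_m W$, where $W>0$ is the a.s.\ positive martingale limit of $Z_n/\mathbb{E}[Z_n]$ (this is where the hypothesis $\sum_k k^2\nu(k)<\infty$ enters) and $q_m>0$ is the probability of reaching depth~$m$.

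\textbf{The super-multiplicative estimate --- the main obstacle.}
The second ingredient, and the genuine technical obstacle, is a strengthening of the methods of \cite{SV}: if a connected graph $G$ contains disjoint connected, pairwise-close, well-spreading subgraphs $H_1,\dots,H_k$, then the contact process on $G$ from full occupancy satisfies
\[
\uptau_G \;\ge\; e^{-\psi}\,\prod_{i=1}^k \uptau_{H_i}
\qquad\text{with probability}\;\ge\;1-\eta ,
\]
where $\uptau_{H_i}$ is the extinction time of the \emph{autonomous} contact process on $H_i$, the loss $\psi$ is \emph{sublinear} in $\sum_i|H_i|$, and $\eta\to0$ as $\min_i|H_i|\to\infty$, both uniformly in $k$ and in the geometry. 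Here one must get past the naive bound $\uptau_G\succeq\max_i\uptau_{H_i}$ (the process survives at least as long as the longest-lived block), which is far too weak --- for large $k$ it only yields $\gamma_n\gtrsim\gamma_m/k_n\to0$ --- and really obtain the \emph{product}. This calls for a renewal/restart argument in the graphical representation: exploiting that the infection and recovery clocks internal to distinct $H_i$ are independent, one couples to the process $\xi$ on $G$ a family of \emph{independent} autonomous processes $\zeta^i$ on the $H_i$ with $\zeta^i_t\le\xi_t\cap H_i$, so that a global extinction of $\xi$ forces all the $\zeta^i$ to be empty simultaneously, and one chains the survival of the blocks across successive re-filling events so that the associated costs multiply rather than maximise. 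The improvement over \cite{SV} --- where for \emph{arbitrary} graphs a polylogarithmic-in-the-exponent loss cannot be avoided, because of possible bottlenecks --- is that when the $H_i$ are macroscopic and regularly arranged the accumulated loss $\psi$ can be kept of the order of the total boundary, hence sublinear in the volume; carrying this out uniformly over all the models is where I expect the difficulty to lie.

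\textbf{Soft consequences and conclusion.}
Granting the two ingredients, the remainder is routine. By monotonicity (deleting vertices and edges can only decrease the extinction time) $\uptau_{{\bf G}_n}\ge\uptau_{G'}$, where $G'$ is the subgraph of ${\bf G}_n$ spanned by $H_1,\dots,H_{k_n}$ and the connecting paths, to which the super-multiplicative estimate applies with pieces $H_i$ that are i.i.d.\ copies of ${\bf G}_m$; taking $\mathbb{E}[\log(1+\cdot)]$, dividing by $|{\bf G}_n|$, and letting $n\to\infty$ with $m$ fixed gives $\liminf_n\gamma_n\ge\gamma_m$ for every~$m$. Since $\gamma_n\le C$ by the crude bound (from any configuration the process is emptied within one time unit with probability $\ge e^{-C|{\bf G}_n|}$, using bounded degrees, and $\sum_v\deg v=2(|{\bf G}_n|-1)$ for the tree), the limit $\gamma:=\lim_n\gamma_n=\sup_m\gamma_m$ exists and is finite; it is positive because ${\bf G}_m$ contains a path of length of order~$m$, so $\uptau_{{\bf G}_m}$ is exponentially large with high probability (Durrett--Schonmann \cite{DS88}, using $\lambda>\lambda_c(\mathbb{Z})$) and hence $\gamma_m>0$ for $m$ large. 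Combining the super-multiplicative bound with the law of large numbers $k_n^{-1}\sum_{i\le k_n}\log(1+\uptau_{H_i})\to\mathbb{E}[\log(1+\uptau_{{\bf G}_m})]$ yields the lower-tail estimate $\mathbb{P}\big(|{\bf G}_n|^{-1}\log(1+\uptau_{{\bf G}_n})<\gamma-\varepsilon\big)\to0$ for every $\varepsilon>0$. The matching upper tail then requires no further probabilistic input, because the mean $\mathbb{E}[\log(1+\uptau_{{\bf G}_n})]=\gamma_n|{\bf G}_n|$ is pinned down: with $L_n:=\log(1+\uptau_{{\bf G}_n})\ge0$, $p_n:=\mathbb{P}(L_n>(\gamma+\delta)|{\bf G}_n|)$ and $q_n:=\mathbb{P}(L_n<(\gamma-\varepsilon)|{\bf G}_n|)\to0$, bounding $\mathbb{E}[L_n]$ below over the three ranges $\{L_n<(\gamma-\varepsilon)|{\bf G}_n|\}$, $\{(\gamma-\varepsilon)|{\bf G}_n|\le L_n\le(\gamma+\delta)|{\bf G}_n|\}$ and the complement gives $\gamma_n\ge(\gamma-\varepsilon)(1-q_n)+(\delta+\varepsilon)p_n$, whence $\limsup_n p_n\le\varepsilon/(\delta+\varepsilon)$, and letting $\varepsilon\downarrow0$ forces $p_n\to0$. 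Therefore $|{\bf G}_n|^{-1}\log(1+\uptau_{{\bf G}_n})\to\gamma$ in probability, and since $\uptau_{{\bf G}_n}\to\infty$ in probability the same holds with $\log\uptau_{{\bf G}_n}$, which is the claimed statement; the Galton--Watson case is identical once the generational recursion replaces the dyadic one and the conditioning on survival (equivalently, in the relevant sense, on reaching generation~$n$) is carried throughout.
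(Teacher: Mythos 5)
Your overall architecture is the same as the paper's: a block decomposition of ${\bf G}_n$ into copies of ${\bf G}_m$, a super-multiplicative lower bound on extinction times with a loss that is sublinear in the volume, a law of large numbers over blocks, positivity from known exponential lower bounds, and a soft argument turning convergence of means plus a lower-tail bound into convergence in probability (your ``pinned mean'' trick is essentially the paper's concluding argument). The problem is that the centerpiece of your plan --- the high-probability bound $\uptau_G\ge e^{-\psi}\prod_i\uptau_{H_i}$ with $\psi$ sublinear in $\sum_i|H_i|$ --- is not proved: you yourself flag it as ``the genuine technical obstacle'' and offer only a restart/chaining heuristic, while the coupling you describe explicitly yields only $\max_i\uptau_{H_i}$. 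This is a genuine gap, and moreover the detour through a pathwise, ``well-spreading''-dependent estimate is unnecessary: Proposition~2.9 of \cite{SV} (Proposition~\ref{supmult} here) already states that for \emph{any} finite connected graph and any disjoint connected subgraphs, $\E[\uptau_G]\ge c_0(2|G|^3)^{-(N+1)}\prod_i\E[\uptau_{G_i}]$, i.e.\ a loss $\exp\{O(N\log|G|)\}$, sublinear once the blocks are macroscopic; your reading that \cite{SV} only gives polylogarithmic-in-the-exponent losses conflates that paper's lower bound on $\E[\uptau_G]$ with its super-multiplicativity result. The paper works throughout with $X_n=n^{-d}\log\E[\uptau_{{\bf G}_n}\mid{\bf G}_n]$ precisely so that this expectation-level inequality suffices, and only converts to $\uptau_{{\bf G}_n}$ at the very end via Markov's inequality and Lemma~\ref{lem:inverse_cheb}; your choice to run the recursion on $\uptau$ itself is what forces you into the stronger, unproven statement.

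Two further points would need repair even granting that estimate. First, your law of large numbers $k_n^{-1}\sum_i\log(1+\uptau_{H_i})\to\E[\log(1+\uptau_{{\bf G}_m})]$ treats the blocks as independent; the contact-process randomness is, but for random interlacements, its vacant set and GFF level sets the \emph{environments} in distinct boxes are correlated, which is why the paper imposes the polynomial decorrelation hypothesis \eqref{H3} and runs a discretized Chebyshev/covariance bound, and why the event that every block's maximal cluster is wired into ${\bf G}_n$ (the event $A$) is controlled via the component-structure estimates of Proposition~\ref{prop:component} under \eqref{H2} --- none of which appears in your sketch beyond ``such facts are available in the literature.'' Second, normalizing by the random $|{\bf G}_n|$ from the start makes your Fekete step $\liminf_n\gamma_n\ge\gamma_m$ an unjustified exchange of $\E[X/Y]$ with $\E[X]/\E[Y]$; for the lattice models this is fixable by normalizing by $|B_n|$ and using $|{\bf G}_n|/|B_n|\to\theta$ at the end (exactly what the paper does, with $\gamma=\widetilde\gamma/\theta$), but for Galton--Watson trees $|{\bf G}_n|/v_n\to W_\infty$ is a genuinely random, non-degenerate limit, so the mean of $|{\bf G}_n|^{-1}\log\E[\uptau_{{\bf G}_n}\mid{\bf G}_n]$ is not pinned by the recursion (which naturally controls the $m^n$-normalized quantity $Y_n$), and your upper-tail argument does not transfer; the paper needs the separate contradiction argument of Claim~\ref{claim3}, weighting by $W_\infty$ and using that its law on survival has no atom at zero. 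So the assertion that ``the Galton--Watson case is identical'' is not accurate.
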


Our main tool to prove Theorem~\ref{thm:thm} is Proposition~\ref{supmult} below, which was proved in our previous paper \cite{SV}; it states that on any (deterministic) graph, and for any  
$\lambda$ larger than $\lambda_c(\Z)$, the expected extinction time is larger than the product of the expected extinction times on any collection of 
disjoint connected subgraphs, up to some correction term. This result allows us to use a block decomposition in the same vein as in the proof of Mountford 
\cite{mo99} in the case of boxes of~$\mathbb{Z}^d$. The main graph property which is required is as already mentioned some kind of self-similarity, namely that at each scale $n$, 
the graph~${\bf G}_n$ can be decomposed as a union of subgraphs, which are copies (in law) of the original graph at a smaller scale.

Few results are available for the contact process on random graphs obtained from percolation-type models; see for instance~\cite{Ber, ChenYao, Xue} and references therein. The contact process on the supercritical random geometric graph has also been previously considered by M\'enard and Singh \cite{MS}, who proved that 
the critical infection rate is positive, and by Can \cite{Can} who obtained sharp bounds on the expected value of the 
extinction time on ${\bf G}_n$ when the radius of connectedness goes to infinity. The contact process on Galton-Watson trees has been studied by in Pemantle~\cite{Pem}. 

Finally, let us mention that for several important random graph models, it would be interesting to obtain results of the form~\eqref{eq:conv_in_prob}, but our present techniques are not applicable (at least not directly). These include the configuration model, the Erd\' os-Renyi random graph, random planar maps, and Delaunay triangulations of the plane (provided that in each case, the parameters defining the graph and the value of~$\lambda$ yield a regime of exponentially large extinction time).

The paper is organized as follows. In the next section we gather known important results for the contact process. In Section 3, we consider all the percolation-type models we treat (models 1 to 6 in the list above), 
starting with those which are subgraphs of $\Z^d$. In this case, very similarly to \cite{DRS} and \cite{Sap}, we list a number of general conditions under which the conclusion of Theorem \ref{thm:thm} hold true, and which are known to be satisfied in all the models mentioned above. The case of Galton-Watson trees is treated separately in the last section, as the proof in this setting presents some substantial differences.  

\subsection{Notation and conventions for graphs}
We use the convention that a graph $G=(V,E)$ is a collection of vertices $V$ and edges $E$ between vertices, and we assume that graphs are undirected, with no loops, and no multiple edges. They are also assumed to be locally finite, meaning that any vertex has only finitely many neighbors (or edges emanating from this vertex).

We denote by $|V|$ and $|E|$ respectively the number of vertices and edges in the graph, and use the convention that $|G|$ also denotes the number of vertices in the graph. 

If~$G = (V,E)$ and~$V' \subseteq V$, the subgraph of~$G$ induced by~$V'$ is the graph~$G' = (V',E')$, where~$E'$ is the set of edges of~$E$ with both extremities in~$V'$.

Finally, we use the notation~$\Z^d$ to refer both to the set of~$d$-dimensional vectors with integer coordinates, and to the graph with these vectors are vertices, and edges connecting vectors at~$\ell_1$-distance one from each other.

\section{Prelimaries on the contact process on finite graphs}

For a full account of the contact process on~$\Z^d$ and other locally finite graphs, we refer the reader to the book of Liggett \cite{Lig}. Here we merely recall some bounds on the expected extinction time of the process on finite graphs. We start with the following basic fact, which provides a general upper bound, and indicate its short proof for the reader's convenience.  
\begin{lemma}\label{lem:unif_bound}
For any $\lambda>0$, and any finite connected graph $G=(V,E)$, 
$$\mathbb E[\uptau_G] \le  \exp(|V| + 2\lambda |E|).$$
\end{lemma}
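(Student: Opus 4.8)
The plan is to reduce everything to a single, configuration-independent estimate: starting from \emph{any} configuration, the contact process on $G$ empties within one unit of time with probability at least $p:=\exp\bigl(-(|V|+2\lambda|E|)\bigr)$. Given this, a routine tail bound finishes the proof.

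To produce the estimate I would work in the Harris graphical construction on the time interval $[0,1]$: an independent rate-$1$ Poisson process of recovery marks at each vertex $x\in V$, and an independent rate-$\lambda$ Poisson process of infection arrows along each of the $2|E|$ oriented edges. Let $A$ be the event that every vertex carries at least one recovery mark in $[0,1]$ and no infection arrow appears on any oriented edge in $[0,1]$. On $A$ the only transitions are recoveries, so each vertex is healthy from the time of its first recovery mark onwards; hence, whatever the configuration at time $0$, the configuration at time $1$ is empty. Consequently $\mathbb{P}(\uptau_G\le 1\mid \xi_0=\xi)\ge\mathbb{P}(A)$ for every initial configuration $\xi$.

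Computing $\mathbb{P}(A)$ via independence of the Poisson processes gives $\mathbb{P}(A)=(1-e^{-1})^{|V|}e^{-2\lambda|E|}$. Since $e>2$, we have $1-e^{-1}\ge e^{-1}$, so $\mathbb{P}(A)\ge e^{-|V|}e^{-2\lambda|E|}=p$. Applying the Markov property at the integer times, and using that the bound above is uniform in the configuration at time $k$, one gets $\mathbb{P}(\uptau_G>k)\le(1-p)^k$ for all $k\in\mathbb{N}$; integrating the tail, $\mathbb{E}[\uptau_G]=\int_0^\infty\mathbb{P}(\uptau_G>t)\,\d t\le\sum_{k\ge0}\mathbb{P}(\uptau_G>k)\le\sum_{k\ge0}(1-p)^k=1/p=\exp(|V|+2\lambda|E|)$.

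There is essentially no obstacle here: the argument is robust and does not even use connectedness of $G$. The only point requiring a moment's thought is the elementary inequality $1-e^{-1}\ge e^{-1}$, which is precisely what lets the vertex contribution be absorbed into the clean form $\exp(|V|+2\lambda|E|)$ rather than leaving a factor $(1-e^{-1})^{-|V|}$.
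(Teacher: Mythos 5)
Your proof is correct and is essentially the paper's own argument: both bound the probability of emptying within one time unit from below by the probability that every vertex recovers and no transmission occurs, i.e.\ $(1-e^{-1})^{|V|}e^{-2\lambda|E|}\ge \exp(-|V|-2\lambda|E|)$, and then conclude by geometric domination of $\uptau_G$. Your version simply spells out the graphical-construction details and the tail-sum computation that the paper leaves implicit.
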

\begin{proof}
The probability that extinction occurs before time $1$ is larger than the probability that all vertices recover and no transmission occurs before time $1$, which is  at least~$\exp(-|V|-2\lambda |E|)$. Then,~$\uptau_G$ is stochastically dominated by a random variable following the geometric distribution with parameter~$\exp(-|V|-2\lambda |E|)$; the expectation of this random variable is~$\exp(|V| + 2\lambda |E|).$
\end{proof}

On the other hand, general lower bounds were provided in \cite{MMVY} and \cite{SV}: 
\begin{theorem} \label{thm:lower}
Assume $\lambda > \lambda_c(\mathbb{Z})$.
\begin{itemize}
\item[\cite{MMVY}] For any $d > 0$, there exists $\cmmvy > 0$ such that for any connected graph $G$ with degrees bounded by $d$, we have 
$$\mathbb{E}[\uptau_G] > \exp\{\cmmvy |G|\}.$$
\item[\cite{SV}] For any $\varepsilon > 0$, there exists $\csv > 0$ such that for any connected graph $G$ with at least two vertices, we have
$$\mathbb{E}[\uptau_G] > \exp\left\{\frac{\csv|G|}{(\log|G|)^{1+\varepsilon}} \right\}. $$ 
\end{itemize}
\end{theorem}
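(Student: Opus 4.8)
The plan has two stages: a reduction to a statement about the \emph{quenched} mean extinction time, and a block decomposition controlling that mean. For the first stage, I would work conditionally on ${\bf G}_n$, so that the process lives on a finite connected graph, and invoke the metastability theory of the contact process on such graphs: when $\mathbb{E}[\uptau_{{\bf G}_n}\mid {\bf G}_n]$ is large, $\uptau_{{\bf G}_n}/\mathbb{E}[\uptau_{{\bf G}_n}\mid{\bf G}_n]$ is close to an $\mathrm{Exp}(1)$ law, and one has a uniform left-tail estimate $\mathbb{P}(\uptau_{{\bf G}_n}\le s\mid{\bf G}_n)\ge c\,s/\mathbb{E}[\uptau_{{\bf G}_n}\mid{\bf G}_n]$ for $1\le s\le\mathbb{E}[\uptau_{{\bf G}_n}\mid{\bf G}_n]$. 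Consequently $\log\uptau_{{\bf G}_n}-\log\mathbb{E}[\uptau_{{\bf G}_n}\mid {\bf G}_n]$ is tight, and since $|{\bf G}_n|\to\infty$ in probability it suffices to prove that $Y_n:=|{\bf G}_n|^{-1}\log\mathbb{E}[\uptau_{{\bf G}_n}\mid {\bf G}_n]$ converges in probability to a deterministic $\gamma\in(0,\infty)$. Note that Lemma~\ref{lem:unif_bound} gives $Y_n\le 1+2\lambda d$ deterministically in all our models, and the first bullet of Theorem~\ref{thm:lower} gives $Y_n\ge\cmmvy$ once $|{\bf G}_n|$ is large; so only the convergence is at stake.

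For the percolation-type models (items 1--6), following the strategy of \cite{DRS,Sap}, I would isolate abstract hypotheses to be checked model by model. Fixing $n_0$ and a buffer width $L$, partition $B_n$ (with $n=Kn_0$) into $K^d$ sub-boxes of side $2(n_0-L)$ separated by moats of width $\sim L$, and let $K_i$ be the maximal cluster of the model inside the $i$-th sub-box. The hypotheses are: (i) the $K_i$ are i.i.d.\ copies of ${\bf G}_{n_0-L}$; (ii) on an event of probability tending to $1$ as $n\to\infty$ (for each fixed $n_0,L$), all the $K_i$ lie inside the unique maximal cluster ${\bf G}_n$ of $B_n$; (iii) $|{\bf G}_n|=(1+o(1))\,\theta\,|B_n|$ and $|{\bf G}_n\setminus\bigcup_i K_i|\le(\rho_{n_0,L}+o(1))|{\bf G}_n|$ in probability, with $\rho_{n_0,L}\to0$ as $n_0\to\infty$ for fixed $L$. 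These are precisely the standard supercritical renormalization inputs (sprinkling, local uniqueness of the giant component, connectivity of crossing clusters), available for Bernoulli bond and site percolation, the occupied and vacant sets of random interlacements, the level sets of the Gaussian free field in their strongly percolative regimes, and the random geometric graph; I would quote the relevant statement in each case.

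Granting the decomposition, I would sandwich $Y_n$. Lower bound: on the good event the $K_i$ are disjoint connected subgraphs of ${\bf G}_n$, so Proposition~\ref{supmult} gives $\log\mathbb{E}[\uptau_{{\bf G}_n}\mid{\bf G}_n]\ge\sum_i\log\mathbb{E}[\uptau_{K_i}\mid K_i]-o(|{\bf G}_n|)$; the $\log\mathbb{E}[\uptau_{K_i}\mid K_i]$ are i.i.d.\ and lie in $[0,C|B_{n_0}|]$, so the law of large numbers together with (iii) yields $\liminf_n Y_n\ge \mathbb{E}[\log\mathbb{E}[\uptau_{{\bf G}_{n_0-L}}\mid{\bf G}_{n_0-L}]]/(\theta|B_{n_0}|)=:\alpha_{n_0,L}$ in probability. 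Upper bound: for any fixed $s_0$ a restart argument gives $\mathbb{E}[\uptau_G\mid G]\le s_0/\mathbb{P}_{\mathrm{full}}(\uptau_G\le s_0\mid G)$, and on the good event one lower-bounds $\mathbb{P}_{\mathrm{full}}(\uptau_{{\bf G}_n}\le s_0\mid {\bf G}_n)$ by requiring that no infection mark fall on any edge incident to ${\bf G}_n\setminus\bigcup_i K_i$ during $[0,s_0]$ and that every such vertex recover by $s_0$ --- which costs only $e^{-(C\rho_{n_0,L}+o(1))|{\bf G}_n|}$ by (iii) and bounded degree, and disconnects the $K_i$ from one another --- and that each now isolated $K_i$ go extinct by $s_0$; a law of large numbers over the i.i.d.\ factors $\mathbb{P}_{\mathrm{full}}(\uptau_{K_i}\le s_0\mid K_i)$, together with the uniform left-tail estimate from the first paragraph (which gives $-\log\mathbb{P}_{\mathrm{full}}(\uptau_{K_i}\le s_0\mid K_i)\le\log\mathbb{E}[\uptau_{K_i}\mid K_i]+O_{s_0}(1)$ for large $K_i$), yields $\limsup_n Y_n\le\alpha_{n_0,L}+C\rho_{n_0,L}+o(1)$ in probability. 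Letting $n_0\to\infty$ with $L$ fixed makes $\rho_{n_0,L}\to0$, so $\limsup_n Y_n-\liminf_n Y_n\to0$; thus $Y_n\to\gamma:=\lim_{n_0}\alpha_{n_0,L}$ (the limit existing because the sandwich forces $\alpha_{n_0,L}$ to converge), with $\gamma\in[\cmmvy,\,1+2\lambda d]$. This settles items 1--6.

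For the Galton--Watson tree the recursion is of a different nature and must be handled separately. If $D\sim\nu$ is the root's offspring number, then ${\bf G}_n$ is the root joined to $D$ independent copies of ${\bf G}_{n-1}$, so for any $k=k(n)$ with $k\to\infty$ and $n-k\to\infty$, ${\bf G}_n$ contains about $Wm^k$ i.i.d.\ copies of ${\bf G}_{n-k}$ as genuine subgraphs (no gluing needed), with $|{\bf G}_n|=1+\sum_j|{\bf G}_{n-1}^{(j)}|$. Running Proposition~\ref{supmult} and the restart estimate over these copies, with a law of large numbers whose validity relies on the hypothesis $\sum_k k^2\nu(k)<\infty$ (which controls $|{\bf G}_n|$ and the relevant empirical averages), gives $\log\uptau_{{\bf G}_n}=(\gamma+o_{\mathbb{P}}(1))|{\bf G}_n|$ with $\gamma=\lim_m\mathbb{E}[\log\mathbb{E}[\uptau_{{\bf G}_m}\mid{\bf G}_m]]/\mathbb{E}[|{\bf G}_m|]$; normalizing by $|{\bf G}_n|$ itself rather than by its mean makes the martingale limit $W$ cancel, and positivity of $\gamma$ comes from the first bullet of Theorem~\ref{thm:lower} applied to a bounded-degree subtree carrying a positive fraction of ${\bf G}_n$, whose existence is again guaranteed by the second-moment assumption on $\nu$. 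The step I expect to be the main obstacle is the sharp upper bound on the quenched rate --- ensuring the exponent produced by the decomposition cannot be exceeded --- which is the analogue of the delicate upper bound in Mountford's proof \cite{mo99} and where both the sharpness of the restart estimate and the uniform left-tail metastability bound are essential; a secondary, more technical, difficulty is verifying hypotheses (i)--(iii) for all six percolation models (notably the vacant set of random interlacements and the GFF level sets, where one must stay in the strongly percolative regime) and adapting the regeneration-plus-law-of-large-numbers scheme to the tree geometry.
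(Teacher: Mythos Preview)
Your proposal addresses the wrong statement. Theorem~\ref{thm:lower} is a \emph{cited} preliminary result: the first bullet is quoted from \cite{MMVY} and the second from \cite{SV}, and the paper gives no proof of either (nor is one expected here). Your write-up is instead a proof sketch of the paper's main result, Theorem~\ref{thm:thm}; indeed you invoke Theorem~\ref{thm:lower} as an input, which makes clear you are not attempting to prove it.

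If your intent was Theorem~\ref{thm:thm}, then your lower-bound half (block decomposition, Proposition~\ref{supmult}, law of large numbers over sub-boxes) matches the paper's argument closely. The main divergence is in the upper bound. You propose a restart/disconnection scheme: clear the buffer region, isolate the sub-clusters, and multiply extinction probabilities. The paper does something softer and shorter: it defines $\widetilde\gamma=\limsup_n\mathbb{E}[X_n]$, proves via the recursive lower bound that $\mathbb{P}(X_n<\widetilde\gamma-\varepsilon)\to0$, and then uses the elementary inequality
\[
\mathbb{E}[X_n]\ge\widetilde\gamma(1-\varphi_n(\varepsilon))+\sqrt{\varphi_n(\varepsilon)}\,\mathbb{P}\bigl(X_n>\widetilde\gamma+\sqrt{\varphi_n(\varepsilon)}\bigr)-\varepsilon
\]
(with $\varphi_n(\varepsilon)=\mathbb{P}(X_n<\widetilde\gamma-\varepsilon)+\varepsilon$) together with $\mathbb{E}[X_n]\le\widetilde\gamma+\varepsilon$ to force the upper-tail probability to zero. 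This avoids any quantitative metastability input (your ``uniform left-tail estimate'') and any control of the buffer volume beyond what is already needed for the lower bound; in particular the paper never needs your hypothesis~(iii) on $|{\bf G}_n\setminus\bigcup_iK_i|$. Your disconnection argument may well go through, but it carries an extra burden (sharpness of the restart bound, control of the moat volume) that the paper sidesteps entirely. A second difference: the paper does not assume the sub-box clusters are i.i.d.\ --- hypothesis~\eqref{H3} allows polynomial decay of correlations, which is what is actually available for random interlacements and the Gaussian free field --- so your hypothesis~(i) as stated is too strong for models~3--5.
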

Next, let us recall the general result from \cite{SV} alluded to in the introduction, which is also our main tool. 
\begin{proposition}[\cite{SV}, Proposition 2.9] 
\label{supmult}
There exists a constant $c_0>0$, such that for any finite connected graph $G$ and any $\lambda>\lambda_c(\Z)$, the following holds: 
for any $N\ge 1$, and any finite collection of disjoint connected subgraphs $G_1,\dots,G_N\subset G$, one has 
$$\E[\uptau_G] \ge \frac{c_0}{(2|G|^{3})^{N+1}} \cdot \prod_{i=1}^{N} \E[\uptau_{G_i}].$$
\end{proposition}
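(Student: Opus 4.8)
The plan is to make rigorous the following picture: the contact process on $G$ can reach the empty state only when \emph{all} of $G_1,\dots,G_N$ are simultaneously depleted of infection; but each $G_i$, once it carries a healthy amount of infection, stays active for a time of order $\E[\uptau_{G_i}]$, while a depleted $G_i$ is re-infected by any still-active $G_j$ within a time polynomial in $|G|$ (here one uses that $G$ is connected and that $\lambda>\lambda_c(\Z)$). Hence the simultaneous collapse of all blocks happens only at rate $\mathrm{poly}(|G|)/\prod_i\E[\uptau_{G_i}]$, which after integrating gives $\E[\uptau_G]\gtrsim \prod_i\E[\uptau_{G_i}]$ up to a polynomial‑in‑$|G|$ loss raised to the number of blocks --- precisely the form of the claimed estimate. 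I would organise the argument around the two‑block case: for disjoint connected $H_1,H_2\subseteq G$, $\E[\uptau_G]\ge c_1|G|^{-3}\,\E[\uptau_{H_1}]\,\E[\uptau_{H_2}]$. The general statement then follows by induction on $N$: first replace $G$ by a minimal connected subgraph of $G$ containing all of $G_1,\dots,G_N$ (legitimate because restricting the process to a subgraph only decreases the extinction time), which arranges the $G_i$ into a ``tree of blocks'' joined by vertex‑disjoint paths; then peel off a leaf block $G_j$ together with its unique attaching path, apply the two‑block estimate to $G_j$ and to the connected remainder $G'\supseteq\bigcup_{i\ne j}G_i$ (these are vertex‑disjoint), and invoke the induction hypothesis on $G'$, arranging constants and powers of $|G|$ so the recursion closes with exponent $N+1$.

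The ingredients, all available for $\lambda>\lambda_c(\Z)$ from the graphical construction and the estimates of \cite{MMVY,SV} in the spirit of Mountford's block analysis \cite{mo99}, are: \emph{(Domination and independence)} the trace of $\xi^G_t$ on $V(G_i)$ stochastically dominates the contact process on the subgraph $G_i$, and since the $G_i$ involve disjoint families of Poisson clocks these dominated copies can be coupled to be mutually independent; \emph{(Restart)} from any ``good'' configuration of a connected graph $H$ the residual extinction time is at least a fixed fraction of $\E[\uptau_H]$ with probability bounded below (e.g.\ $\P(\uptau_H>\tfrac13\E[\uptau_H])\ge\tfrac13$ starting from full occupancy), so a ``good'' block reliably remains active for a time of order $\E[\uptau_H]$; \emph{(Re‑seeding)} from a single infection sitting inside some block $G_j$, with probability at least $|G|^{-a}$ and within time at most $|G|^{a}$ the process on $G$ reaches a configuration that is ``good'' for a prescribed block $G_i$ --- the crucial point being that driving the infection from $G_j$ to $G_i$ along a connecting path of length $\ell\le|G|$ succeeds with probability bounded \emph{below uniformly in $\ell$}, which is exactly one‑dimensional supercriticality $\lambda>\lambda_c(\Z)$.

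Granting these, the two‑block estimate is a renewal/excursion decomposition: starting $\xi^G$ from full occupancy and assuming $\E[\uptau_{H_2}]\le\E[\uptau_{H_1}]$, track the coarse state of $(H_1,H_2)$, each block being ``up'' (in a good configuration) or ``down''. From $(\mathrm{up},\mathrm{up})$ the pair stays there a time of order $\E[\uptau_{H_2}]$ until a block falls; the fallen block is rescued by re‑seeding from the surviving one within time $|G|^{O(1)}$, and the surviving block dies before the rescue with probability only $|G|^{O(1)}/\E[\uptau_{H_\bullet}]$; and $\xi^G$ is extinct only when both blocks are down at once. Counting excursions --- of order $\E[\uptau_{H_1}]/|G|^{O(1)}$ of them, each lasting of order $\E[\uptau_{H_2}]$ --- yields $\E[\uptau_G]\gtrsim|G|^{-O(1)}\E[\uptau_{H_1}]\E[\uptau_{H_2}]$, and bookkeeping the exponent gives the factor $|G|^{-3}$ (any fixed polynomial would suffice for the application to Theorem~\ref{thm:thm}).

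The main obstacle, in my view, is to make ``good configuration'' precise and to establish the re‑seeding estimate with an inverse‑\emph{polynomial} rather than merely positive lower bound, uniformly over all connected $G$: one cannot use ``fully infected'' as the good state --- on a large star, full occupancy has exponentially small probability even under the quasi‑stationary law --- so one must work with a structured‑core or quasi‑stationary notion of ``good'', proving both that a lone infection reaches it quickly with inverse‑polynomial probability and that from it the residual extinction time is comparable, up to constants, to $\E[\uptau_H]$. Assembling these delicate contact‑process estimates (which underlie \cite{MMVY}) together with the restart bound into the renewal count above, while keeping every loss polynomial in $|G|$, is where essentially all of the work lies.
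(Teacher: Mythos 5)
First, a point of comparison: this paper does not prove Proposition \ref{supmult} at all --- it imports it from \cite{SV} (Proposition 2.9 there), remarking only that the proof given there for trees works verbatim for general graphs. So your proposal has to be measured against the proof in \cite{SV}, whose philosophy (a block/renewal argument in the spirit of \cite{mo99}, powered by one-dimensional supercriticality, with polynomial-in-$|G|$ losses per block) your sketch does capture: the excursion-counting heuristic, the use of $\lambda>\lambda_c(\Z)$ to push the infection along connecting paths with probability bounded below uniformly in the path length, the peeling of a leaf block from a tree-of-blocks arrangement and induction on $N$, are all in the right spirit; and the preliminary reduction to a minimal connected subgraph containing the $G_i$ is legitimate, since the extinction time is monotone in the graph while the loss $(2|G|^3)^{N+1}$ only improves when $|G|$ decreases.

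However, as a proof the proposal has a genuine gap, which you yourself flag in the last paragraph: the two load-bearing estimates --- the ``re-seeding'' bound (from a single infected vertex, reach a configuration that is good for a prescribed block within polynomial time and with inverse-polynomial probability) and the ``restart'' bound (from a good configuration the residual lifetime of a block is comparable to $\E[\uptau_{G_i}]$) --- are essentially the content of the result being proved, and you do not supply them; invoking estimates ``in the spirit of'' \cite{MMVY} and \cite{SV} is not a proof, and the excursion bookkeeping (independence of the dominated block processes across successive re-seedings, the claim that extinction forces all blocks to be down within a window of polynomial length, and the passage from this picture to an expectation lower bound with loss exactly $(2|G|^{3})^{N+1}$) is nowhere made precise. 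Note also that the obstacle you identify around ``good configurations'' admits a much softer resolution than the quasi-stationary apparatus you envisage: by additivity of the graphical construction, $\xi^{V(H)}_t=\bigcup_{x\in H}\xi^{x}_t$, hence $\max_x\E[\uptau_H^x]\ge \E[\uptau_H]/|H|$, and since $\E[\uptau_H^x]\le t+\P(\uptau_H^x>t)\cdot\E[\uptau_H]$ one gets $\P\bigl(\uptau_H^x>\E[\uptau_H]/(2|H|)\bigr)\ge 1/(2|H|)$ for the best seed $x$; re-seeding a block then only requires carrying the infection to that particular vertex along a path (which is where $\lambda>\lambda_c(\Z)$ enters, via traversal probabilities bounded below uniformly in the path length), with no structured ``good state'' needed. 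It is this kind of elementary single-seed estimate, combined with Lemma \ref{lem:inverse_cheb}, that makes the cited argument close with only polynomial losses; without it (or an equivalent), your renewal count remains a heuristic rather than a proof.
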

We note that this result was stated and proved for trees only in \cite{SV}, but the identical statement, with the same proof, works for general graphs. 

Finally, the following is useful in turning bounds on expectations into bounds on probabilities:
\begin{lemma}[\cite{MMVY}, Lemma 4.5]\label{lem:inverse_cheb}
For any $t > 0$,
\begin{equation*}
\mathbb{P}\left(\uptau_G \leq t\right) \leq \frac{t}{\mathbb{E}[\uptau_G]}.
\end{equation*}
\end{lemma}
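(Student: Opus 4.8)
The plan is to use the attractiveness (monotonicity) of the contact process together with the Markov property to show that the survival function $s \mapsto \mathbb{P}(\uptau_G > s)$ decays at least geometrically along the arithmetic progression $0, t, 2t, \ldots$, and then to compare $\mathbb{E}[\uptau_G]$ with the corresponding geometric sum.

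First, a restart inequality. Set $p := \mathbb{P}(\uptau_G > t)$; we may assume $p < 1$, since if $p = 1$ then $\mathbb{P}(\uptau_G \le t) = 0$ and there is nothing to prove. Denote by $(\xi_s)_{s \ge 0}$ the contact process on $G$ started from full occupancy and by $(\mathcal{F}_s)$ its natural filtration. For any $k \ge 0$, on the event $\{\uptau_G > kt\}$ the configuration $\xi_{kt}$ is nonempty and pointwise below the all-infected configuration; hence, by the Markov property at time $kt$ together with the standard coupling which keeps the contact process started from a smaller configuration below the one started from a larger configuration,
$$\mathbb{P}\big(\uptau_G > (k+1)t \,\big|\, \mathcal{F}_{kt}\big) \;\le\; \mathbb{P}(\uptau_G > t) \;=\; p \qquad \text{on } \{\uptau_G > kt\}.$$
Since $\{\uptau_G > (k+1)t\} \subseteq \{\uptau_G > kt\}$, integrating this bound gives $\mathbb{P}(\uptau_G > (k+1)t) \le p \cdot \mathbb{P}(\uptau_G > kt)$, and iterating from $\mathbb{P}(\uptau_G > 0) = 1$ yields $\mathbb{P}(\uptau_G > kt) \le p^k$ for all $k \ge 0$.

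Next, using that $s \mapsto \mathbb{P}(\uptau_G > s)$ is non-increasing,
$$\mathbb{E}[\uptau_G] \;=\; \int_0^\infty \mathbb{P}(\uptau_G > s)\,\d s \;=\; \sum_{k=0}^\infty \int_{kt}^{(k+1)t} \mathbb{P}(\uptau_G > s)\,\d s \;\le\; t\sum_{k=0}^\infty \mathbb{P}(\uptau_G > kt) \;\le\; t\sum_{k=0}^\infty p^k \;=\; \frac{t}{1-p}.$$
Rearranging gives $1 - p \le t/\mathbb{E}[\uptau_G]$, and since $\mathbb{P}(\uptau_G \le t) = 1 - \mathbb{P}(\uptau_G > t) = 1 - p$, the claim follows.

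The only step that is not a one-line computation is the restart inequality: it relies on the graphical representation / basic coupling of the contact process, which lets one realize the process restarted from $\xi_{kt}$ as staying below the one restarted from full occupancy, so that conditionally on survival up to time $kt$ the residual survival time is stochastically dominated by $\uptau_G$ itself. Everything else — the geometric iteration and the integration of the tail — is elementary; the boundary case $p = 1$ is disposed of at the outset, and finiteness of $\mathbb{E}[\uptau_G]$ is guaranteed by Lemma~\ref{lem:unif_bound}.
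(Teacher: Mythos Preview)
Your proof is correct and is the standard argument: attractiveness gives the submultiplicative tail bound $\mathbb{P}(\uptau_G > kt) \le p^k$, and integrating the tail yields $\mathbb{E}[\uptau_G] \le t/(1-p)$. The paper does not supply its own proof of this lemma---it simply cites \cite{MMVY}, Lemma~4.5---so there is nothing to compare against beyond noting that what you wrote is precisely the classical proof one finds there.
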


\section{Percolation-type models}\label{s:percolation}
\subsection{General lattice model}\label{ss:general}

We consider here random graphs obtained from percolation-type models on $\Z^d$ (models 1-5 in the list before Theorem~\ref{thm:thm}). Rather than treating model by model, we will be able to treat them all at once by following an approach similar of that of~\cite{DRS} and~\cite{Sap}. That is, we will first present a list of four properties to be satisfied by a probability measure~$\mathbb{P}$ under which a random subgraph~${\bf G}$ of~$\Z^d$ is defined. Next, letting~${\bf G}_n$ be the maximal component of~${\bf G} \cap B_n$, we will prove the statement of Theorem~\ref{thm:thm} by only making use of the mentioned properties. Finally, at the end of this subsection, we will give formal definitions of the aforementioned five models (Bernoulli bond and site percolation, occupied and vacant set of random interlacements, excursion set of the Gaussian free field), and provide references that show that each model satisfies the four properties (at least for some choices of their defining parameters).

We let~$\Omega = \{0,1\}^S$, where~$S$ is either the set of vertices or edges of~$\Z^d$ (as usual, elements of~$\Omega$ can be identified with subsets of the vertex (or edge) set of~$\mathbb{Z}^d$). In the percolation terminology, given~$\omega \in \Omega$, a vertex (or edge) is said to be open if its~$\omega$-value is one, and to be closed otherwise. A configuration~$\omega \in \Omega$ then defines a subgraph~${\bf G} = {\bf G}(\omega)$ of~$\Z^d$ in the standard manner. That is, if~$S$ is the set of vertices of~$\Z^d$, then~${\bf G}$ is the subgraph of~$\Z^d$ induced by~$\{x: \omega_x = 1\}$, and if~$S$ is a set of edges, then~${\bf G}$ has all the vertices of~${\Z^d}$ and the set of edges~$\{e: \omega_e = 1\}$. (The second situation is only needed to include the case of Bernoulli bond percolation). We endow~$\Omega$ with the~$\sigma$-algebra generated by the coordinate maps, and note that any probability measure~$\P$ on~$\Omega$ then defines a random subgraph ${\bf G}$ of~$\Z^d$.

Our two first assumptions on $\P$ are the following: 
\begin{equation}\label{H0}
\text{$\P$ is translation invariant and ergodic with respect to spatial shifts of~$\mathbb{Z}^d$}
\end{equation}
and 
\begin{equation}\label{H0'}
\text{$\P$-almost surely $\bf G$ has a unique infinite connected component, denoted by ${\bf G}_\infty$. }
\end{equation}
Under these hypotheses, for any $n\ge 1$,   
we let ${\bf G}_n$ be the maximal connected component of~${\bf G}$ intersected with the box $B_n:=\{-n,\dots,n-1\}^d$ (with the convention that 
if there are more than one maximal connected component, 
one chooses ${\bf G}_n$ among them in an arbitrary way).

Our next assumption ensures that with high probability, connected components of~${\bf G} \cap B_n$ distinct from~${\bf G}_n$ have small diameter. So under \eqref{H0'}, we consider 
\begin{align}\label{H2}
&\nonumber  \hspace{2cm} \text{there exists a constant $\Delta>0$, such that for all $n$ large enough,} \\
&\P\left(\begin{array}{c}
{\bf G}_\infty \cap B_n \neq \emptyset, \text{ and any two} \\
\text{connected components of ${\bf G}\cap B_n$ with diameter}\\
\text{larger than $\frac n{10}$ are connected in $B_{2n}$}
\end{array}\right)\ge 1- \exp(-(\log n)^{1+\Delta}).
\end{align}
Note that when the above assumption holds, one can easily derive the following useful fact: for all $n$ large enough,
\begin{align}\label{H2'}
\P\left(\begin{array}{c}
\text{any two connected components of ${\bf G}_\infty \cap B_n$} \\
\text{are connected in $B_{4n}$}
\end{array}\right)\ge 1- \exp(-(\log 2n)^{1+\Delta}).
\end{align}
Indeed, any connected component of ${\bf G}_\infty\cap B_{2n}$ intersecting $B_n$ necessarily has a diameter larger than $n$, so that one can apply \eqref{H2} in $B_{2n}$. 
Let us also mention that for our purposes this hypothesis \eqref{H2} could be weakened, for instance a super-polynomial decay as in Section 5 of \cite{Sz2} would be sufficient.

Finally our last assumption gives a bound on the decay of correlations for events depending on the configurations inside two disjoint boxes, as the distance between the two boxes diverges. Note that we have not tried to 
find an optimal condition for the proof here, and so we assume that the decay of correlations is at least polynomially fast. Indeed this is good enough for the proof, and it is satisfied by all the models of interest 
to us here.  
We let $\|\cdot\|$ be the Euclidean norm. Then we consider 
\begin{align}\label{H3}
\begin{array}{c} 
\text{there exists a constant $C_0$ such that for any $x\in \Z^d$, and any events $A$ and $B$}\\
\text{depending only on the configurations inside $B_n$ and $x+B_n$ respectively, one has:} \\
|\text{Cov}({\mathds 1}_A,{\mathds 1}_B)| \le C_0 (\frac{n^2}{\|x\|})^{d-2}. 
\end{array}
\end{align}
We restate now Theorem \ref{thm:thm} in this general setting.

\begin{theorem}\label{thm:perco}
Let $\P$ denote a probability measure on $\{0,1\}^{\Z^d}$, $d\ge 2$, satisfying \eqref{H0}, \eqref{H0'}, \eqref{H2} and \eqref{H3}, and let 
$({\bf G}_n)_{n\ge 1}$ and ${\bf G}_\infty$ be as defined above. Consider now the contact process on ${\bf G}_n$, with infection rate $\lambda>\lambda_c(\Z)$, starting from full occupancy, and denote by $\uptau_{{\bf G}_n}$ its extinction time. There exists $\gamma \in (0,\infty)$ such that 
\begin{equation}\label{conv.tau.perco}
\frac 1{|{\bf G}_n|}\cdot  \log \uptau_{{\bf G}_n} \quad \xrightarrow[n\to \infty]{(\P)} \quad \gamma.
\end{equation}
\end{theorem}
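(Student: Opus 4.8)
The plan is to establish the convergence in~\eqref{conv.tau.perco} by proving matching bounds: a subadditivity bound giving an upper limit, and a supermultiplicativity argument (via Proposition~\ref{supmult}) giving a matching lower limit, for the \emph{expected} extinction times, and then transferring these to the random variable $\log\uptau_{{\bf G}_n}$ using Lemma~\ref{lem:inverse_cheb} together with the general lower bound of Theorem~\ref{thm:lower}. The natural quantity to track is $a_n := \mathbb{E}[\log \uptau_{{\bf G}_n}]$ or, more robustly, $b_n := \log \mathbb{E}[\uptau_{{\bf G}_n}]$; because of the exponential scale one expects $b_n / |{\bf G}_n|$ and $a_n/|{\bf G}_n|$ to have the same limit, but some care is needed since $|{\bf G}_n|$ is itself random. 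By \eqref{H0}, \eqref{H0'} and the ergodic theorem, $|{\bf G}_n|/|B_n| \to \theta := \mathbb{P}(0 \in {\bf G}_\infty)$ almost surely (more precisely $|{\bf G}_\infty \cap B_n|/|B_n| \to \theta$, and \eqref{H2} ensures ${\bf G}_n$ and ${\bf G}_\infty \cap B_n$ differ by a negligible set with high probability), so it is equivalent to prove that $\frac{1}{|B_n|}\log \uptau_{{\bf G}_n} \to \gamma\theta$ in probability, and we may rescale at the end.

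\textbf{Lower bound.} For the lower bound on $\liminf$, I would fix a large scale $m$ and tile $B_n$ by roughly $(n/m)^d$ disjoint translated copies of $B_m$. By \eqref{H2'} and translation invariance \eqref{H0}, with probability tending to one (as $m$, then $n$, grow) a positive fraction of these sub-boxes each contain a ``good'' piece of ${\bf G}_\infty$ which is a connected subgraph of ${\bf G}_n$ isomorphic in law to a large component of ${\bf G}\cap B_m$; crucially, by the decorrelation estimate \eqref{H3}, events localized in sufficiently separated sub-boxes are nearly independent, so a second-moment or Chebyshev argument shows that the number $N_n$ of such good sub-boxes is concentrated around $c(n/m)^d$ for some $c = c(m) \to \theta$. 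Applying Proposition~\ref{supmult} with these $N_n$ disjoint connected subgraphs $G_1,\dots,G_{N_n}$ of ${\bf G}_n$ gives
\begin{equation*}
\mathbb{E}[\uptau_{{\bf G}_n}] \ge \frac{c_0}{(2|{\bf G}_n|^3)^{N_n+1}} \prod_{i=1}^{N_n}\mathbb{E}[\uptau_{G_i}].
\end{equation*}
Taking logarithms, dividing by $|B_n|$, using $|{\bf G}_n| \le |B_n|$ so that the polynomial prefactor contributes only $O(N_n \log|B_n|/|B_n|) = O((\log n)/m^d) \cdot (n/m)^d/(n/m)^d$-type terms that vanish as $n\to\infty$ for fixed $m$, and using that each $\mathbb{E}[\uptau_{G_i}]$ is, in expectation over the environment, at least $\exp\{\gamma_m |G_i|\}$ for a quantity $\gamma_m$ built from scale $m$, one obtains $\liminf_n \frac{1}{|B_n|}\log\mathbb{E}[\uptau_{{\bf G}_n}] \ge \theta \gamma_m$ (after controlling the environment via \eqref{H2} and \eqref{H3}). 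The existence of $\gamma = \lim_m \gamma_m$ is itself part of the argument: one shows $(\gamma_m)$ is essentially non-decreasing, or extracts the limit as a $\sup$, exactly as in Mountford~\cite{mo99}; finiteness of $\gamma$ follows from Lemma~\ref{lem:unif_bound} since ${\bf G}_n$ has bounded degree.

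\textbf{Upper bound and conclusion.} For the upper bound on $\limsup$, the subadditivity runs in the reverse direction: conditionally on the environment, the contact process restricted to ${\bf G}_n$ survives no longer (up to polynomial-in-$n$ factors) than would be predicted by pasting together its behavior on a sub-box tiling at scale $m$, which combined with Lemma~\ref{lem:unif_bound} and standard contact-process comparison inequalities yields $\limsup_n \frac{1}{|B_n|}\log\mathbb{E}[\uptau_{{\bf G}_n}] \le \theta\gamma_m'$ with $\gamma_m' \to \gamma$; the cleanest route is to verify that $\gamma := \lim_m \frac{1}{|B_m|}\mathbb{E}[\log\mathbb{E}[\uptau_{{\bf G}_m} \mid {\bf G}_m]]$ exists by a near-subadditivity argument using \eqref{H3} to decouple sub-boxes, and that the same limit is approached from below by the construction above. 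Finally, to pass from expectations to the random variable: the upper tail $\mathbb{P}(\log\uptau_{{\bf G}_n} > (\gamma\theta + \varepsilon)|B_n|)$ is handled by Markov's inequality applied to $\uptau_{{\bf G}_n}$ after conditioning on a high-probability environment event on which $\mathbb{E}[\uptau_{{\bf G}_n}\mid {\bf G}_n] \le \exp\{(\gamma\theta+\varepsilon/2)|B_n|\}$; the lower tail $\mathbb{P}(\log\uptau_{{\bf G}_n} < (\gamma\theta - \varepsilon)|B_n|)$ uses Lemma~\ref{lem:inverse_cheb} with $t = \exp\{(\gamma\theta-\varepsilon)|B_n|\}$ together with the environment lower bound $\mathbb{E}[\uptau_{{\bf G}_n}\mid {\bf G}_n] \ge \exp\{(\gamma\theta-\varepsilon/2)|B_n|\}$, which itself follows from the Proposition~\ref{supmult} block argument applied on a high-probability environment event.

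\textbf{Main obstacle.} The delicate point is not any single contact-process estimate — those are provided by the results quoted in Section~2 — but rather the interplay between the \emph{random environment} and the \emph{block recursion}: one must ensure that the ``good'' sub-boxes used in Proposition~\ref{supmult} are simultaneously (i) numerous (a $\theta$-fraction, via the ergodic theorem and \eqref{H2}), (ii) genuinely disjoint and connected inside ${\bf G}_n$ (via \eqref{H2'}, which glues the large components of ${\bf G}\cap B_m$ into a single component of ${\bf G}_{2n}$), and (iii) behaving nearly independently so that concentration of $N_n$ and of the product $\sum_i \log\mathbb{E}[\uptau_{G_i}]$ holds (via the polynomial decorrelation \eqref{H3}). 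Balancing the three error sources — the $\exp(-(\log n)^{1+\Delta})$ failure probability in \eqref{H2}, the polynomial covariance decay in \eqref{H3} summed over $\sim(n/m)^{2d}$ pairs of sub-boxes, and the polynomial prefactor $(2|{\bf G}_n|^3)^{N_n+1}$ in Proposition~\ref{supmult} — requires choosing the separation between ``independent'' sub-boxes to grow slowly with $n$ (e.g.\ a power of $\log n$) and is where the bulk of the technical work lies; this is precisely the step that parallels, but must be adapted from the deterministic setting of, Mountford's argument in~\cite{mo99}.
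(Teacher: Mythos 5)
The lower-bound half of your outline (tile $B_n$ into boxes of a fixed smaller scale, use \eqref{H2}/\eqref{H2'} to ensure the maximal components of the sub-boxes are disjoint connected subgraphs of ${\bf G}_n$, apply Proposition~\ref{supmult}, and control fluctuations of the block variables by Chebyshev with the covariance bound \eqref{H3}) is in substance the paper's argument, which organizes it as a multiscale induction along $n_k=n_0^{3^k}$ with conditional quantities $X_n=n^{-d}\log\mathbb{E}[\uptau_{{\bf G}_n}\mid {\bf G}_n]$. The final transfer from $X_n$ to $\log\uptau_{{\bf G}_n}$ via Markov and Lemma~\ref{lem:inverse_cheb}, and the rescaling by $\theta$ using $|{\bf G}_n|/|B_n|\to\theta$, also match the paper.

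The genuine gap is your upper bound. You assert that ``the contact process restricted to ${\bf G}_n$ survives no longer (up to polynomial-in-$n$ factors) than would be predicted by pasting together its behavior on a sub-box tiling at scale $m$,'' to be obtained from Lemma~\ref{lem:unif_bound} and ``standard contact-process comparison inequalities.'' No such inequality exists among the quoted tools, and none is standard: attractiveness and the graphical construction only give monotonicity in the graph, i.e.\ they produce \emph{lower} bounds from subgraphs (this is exactly the content of Proposition~\ref{supmult}); an upper bound of the form $\mathbb{E}[\uptau_{{\bf G}_n}]\le \mathrm{poly}(n)\cdot\prod_i\mathbb{E}[\uptau_{G_i}]$ would have to control reinfection between blocks and is precisely the kind of statement one cannot extract from Section~2 (nor is a ``near-subadditivity'' argument for $\lim_m$ available, since the block decomposition only yields near-\emph{super}additivity). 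The paper avoids needing any upper bound of this type: it sets $\widetilde\gamma:=\limsup_n\mathbb{E}[X_n]$ (finite by Lemma~\ref{lem:unif_bound}, positive by Theorem~\ref{thm:lower}), proves by the supermultiplicative block recursion only a \emph{one-sided} (lower-tail) concentration estimate for $X_n$ at all large scales once some $n_0$ has $\mathbb{E}[X_{n_0}]$ close to $\widetilde\gamma$, and then deduces the upper tail for free from boundedness of $X_n$ together with $\mathbb{E}[X_n]\le\widetilde\gamma+o(1)$, which holds by the very definition of $\limsup$; the upper tail of $\log\uptau_{{\bf G}_n}$ itself then follows from Markov's inequality conditioned on $\{X_n\le\widetilde\gamma+\varepsilon/2\}$. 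As written, your scheme has no valid route to $\limsup_n\frac{1}{|B_n|}\log\mathbb{E}[\uptau_{{\bf G}_n}]\le\theta\gamma$, and filling it the way you propose would require a substantial new argument rather than a citation; the limsup-plus-one-sided-concentration device is the missing idea.
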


Before proving this result let us start with some preliminary facts concerning the sizes of the connected components inside $B_n$.   
\begin{proposition}\label{prop:component}
Let $\P$ denote a probability measure on $\{0,1\}^{\Z^d}$, $d\ge 2$, satisfying~\eqref{H0},~\eqref{H0'}, and~\eqref{H2}. Then with the above notation, for any $\varepsilon \in (0,1)$ there exists $\delta > 0$ such that for $n$ large enough the following holds with probability higher than $1-\exp\{-(\log n)^{1+\delta}\}$:
\begin{align}\label{eq:prop_comp1} &\text{${\bf G}_\infty \cap B_n$ has a single component of cardinality larger than~$n^{d-\varepsilon}$};\\[.2cm]
\nonumber&\text{all other components of ${\bf G} \cap B_n$ either have cardinality smaller}\\[-.5cm] \label{eq:prop_comp2}\\&\hspace{4cm} \text{than $n^\varepsilon$ or are contained in $B_n \backslash B_{ n-n^\varepsilon }$.}\nonumber
\end{align}
\end{proposition}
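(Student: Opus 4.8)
\textit{Proof strategy.}
The plan is to run a one–step renormalisation at an auxiliary polynomial scale, exploiting that the failure probability in \eqref{H2} is \emph{stretched}-exponential: this is strong enough that the bad event can be handled by a crude union bound over \emph{all} boxes of that scale, with no percolation-of-bad-blocks argument needed. First we reduce to $\varepsilon<1/2$: the statement is monotone in $\varepsilon$, since if it holds for some $\varepsilon'$ with $\varepsilon'<\varepsilon$ and $\varepsilon+\varepsilon'<1$ (take e.g.\ $\varepsilon'=\min(\varepsilon/2,(1-\varepsilon)/2)$), then smaller cardinality cutoffs and thicker boundary shells make it hold for $\varepsilon$ as well. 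Now fix the intermediate scale $m=m(n):=\lfloor n^{\varepsilon/(3d)}\rfloor$, so that $m\to\infty$, $m^d\le n^{\varepsilon/3}$, and $m=o(n^{\varepsilon/d})$, $m=o(n^{\varepsilon})$. For $z\in m\Z^d$ call $z$ \emph{good} if ${\bf G}_\infty\cap(z+B_{\lfloor m/2\rfloor})\neq\varnothing$ and if, for each $r\in\{2m,4m\}$, any two components of ${\bf G}\cap(z+B_r)$ of $\ell_\infty$-diameter exceeding $r/10$ are connected inside ${\bf G}\cap(z+B_{2r})$. By translation invariance \eqref{H0} and \eqref{H2} applied at the scales $\lfloor m/2\rfloor,2m,4m$ (using that the $\ell_\infty$-diameter is a lower bound for the diameter), a fixed $z$ is good with probability $\ge 1-\exp(-c(\log m)^{1+\Delta})$ for an absolute $c>0$ and $n$ large; as there are only $O((n/m)^d)$ relevant $z$ while $(\log m)^{1+\Delta}\asymp(\log n)^{1+\Delta}$ overwhelms $\log(n^d)$, a union bound shows the event $\mathcal A_n=\{\text{all relevant }z\text{ are good}\}$ has $\P(\mathcal A_n)\ge 1-\exp(-(\log n)^{1+\delta})$ for any fixed $\delta\in(0,\Delta)$ and $n$ large. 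Everything below is on $\mathcal A_n$.

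The only geometric fact used repeatedly is: \emph{if $\mathcal K\subseteq{\bf G}$ is connected, contains a vertex $v$ with $\|v-z\|_\infty<m$ and a vertex outside $z+B_{2m}$, then the component of ${\bf G}\cap(z+B_{2m})$ through $v$ reaches $\partial(z+B_{2m})$, hence has $\ell_\infty$-diameter $\ge m$.} Applying it with $\mathcal K$ a path in ${\bf G}_\infty$ leaving $z+B_{2m}$ (available since ${\bf G}_\infty$ is infinite and, by goodness, meets $z+B_{\lfloor m/2\rfloor}$) yields, for every $z\in\mathcal Z_n:=\{z\in m\Z^d:z+B_{8m}\subseteq B_n\}$, a component $\mathcal C_z$ of ${\bf G}\cap(z+B_{2m})$ of diameter $\ge m$ that meets ${\bf G}_\infty$. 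Goodness at $r=2m$ merges all diameter-$>m/5$ components of ${\bf G}\cap(z+B_{2m})$ inside $z+B_{4m}\subseteq B_n$ into one component $\mathcal H_z$ of ${\bf G}\cap B_n$, and $\mathcal H_z\subseteq{\bf G}_\infty$ since it is connected and contains $\mathcal C_z$. For axis-adjacent $z,z'\in\mathcal Z_n$ we have $z'+B_{2m}\subseteq z+B_{4m}$, so $\mathcal C_z$ and $\mathcal C_{z'}$ both lie in ${\bf G}\cap(z+B_{4m})$ inside components of diameter $\ge m>2m/5$, which goodness at $r=4m$ merges inside $z+B_{8m}\subseteq B_n$; hence $\mathcal H_z=\mathcal H_{z'}$. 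As $\mathcal Z_n$ is connected under axis moves, all the $\mathcal H_z$ ($z\in\mathcal Z_n$) coincide with one component $\mathcal H$ of ${\bf G}\cap B_n$, which — being contained in ${\bf G}_\infty$ — is a component of ${\bf G}_\infty\cap B_n$. Finally $\mathcal H$ contains, for each $z\in\mathcal Z_n$, the vertex of ${\bf G}_\infty\cap(z+B_{\lfloor m/2\rfloor})$ lying in $\mathcal C_z$, and these vertices are distinct because the boxes $z+B_{\lfloor m/2\rfloor}$ ($z\in m\Z^d$) are pairwise disjoint; hence $|\mathcal H|\ge|\mathcal Z_n|\ge(n/m)^d\ge n^{d-\varepsilon/3}>n^{d-\varepsilon}$ for $n$ large.

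It remains to show that any component $\mathcal C$ of ${\bf G}\cap B_n$ with $|\mathcal C|\ge n^\varepsilon$ and $\mathcal C\cap B_{n-n^\varepsilon}\neq\varnothing$ equals $\mathcal H$. Pick $p\in\mathcal C\cap B_{n-n^\varepsilon}$ and $z_*\in m\Z^d$ with $\|z_*-p\|_\infty\le\lfloor m/2\rfloor$; then $z_*\in\mathcal Z_n$ since $m=o(n^\varepsilon)$. From $|\mathcal C|\ge n^\varepsilon$ we get $\mathrm{diam}_\infty(\mathcal C)\ge n^{\varepsilon/d}-1$, so $\mathcal C$ has a vertex at $\ell_\infty$-distance $\ge(n^{\varepsilon/d}-1)/2\ge 4m$ from $p$ (as $n^{\varepsilon/d}\gg m$ for $n$ large), hence $\mathcal C$ leaves $z_*+B_{2m}$; by the geometric fact the component of ${\bf G}\cap(z_*+B_{2m})$ through $p$ has diameter $\ge m>m/5$, so it lies in $\mathcal H_{z_*}=\mathcal H$, and since it also lies in $\mathcal C$ we conclude $\mathcal C=\mathcal H$. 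This gives \eqref{eq:prop_comp2} at once; and it gives \eqref{eq:prop_comp1} because, with $\varepsilon<1/2$, any component of ${\bf G}_\infty\cap B_n$ of cardinality $>n^{d-\varepsilon}$ must meet $B_{n-n^\varepsilon}$ (the shell $B_n\setminus B_{n-n^\varepsilon}$ has only $O(n^{d-1+\varepsilon})=o(n^{d-\varepsilon})$ vertices) and have at least $n^\varepsilon$ vertices (since $n^{d-\varepsilon}>n^\varepsilon$), hence coincide with $\mathcal H$; and such a component is then also a component of ${\bf G}\cap B_n$, so the previous step applies.

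The main obstacle here is not probabilistic but the geometric bookkeeping: \eqref{H2} only reconnects large local clusters inside a \emph{dilated} box, so upgrading this to connectivity \emph{within} $B_n$ forces the use of several nested scales and a careful account of which dilated boxes still sit inside $B_n$ — this is exactly what confines the exceptional large-ish components to the thin shell $B_n\setminus B_{n-n^\varepsilon}$ — together with the need to keep the $r/10$ diameter thresholds of \eqref{H2} genuinely satisfied by the clusters one extracts, which is what dictates both the ratios between the scales and the preliminary reduction to $\varepsilon<1/2$.
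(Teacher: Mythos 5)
Your proof is correct and takes essentially the same route as the paper's: a one-step renormalisation at a mesoscopic polynomial scale, a union bound over the $O((n/m)^d)$ boxes using translation invariance and the stretched-exponential bound in \eqref{H2}, merging of the large local clusters in neighbouring boxes into a single component of ${\bf G}_\infty\cap B_n$ containing at least one vertex per box, and confinement of every other component to small diameter or to the boundary shell. The differences (your scale $n^{\varepsilon/(3d)}$ instead of the paper's $\lfloor n^{\varepsilon/d}/2\rfloor$, anchoring to ${\bf G}_\infty$ through the nonemptiness clause of \eqref{H2} rather than through the unique annulus-crossing cluster the paper extracts via \eqref{H2'}, and your preliminary reduction to $\varepsilon<1/2$) are cosmetic.
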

\begin{proof}
Fix~$\varepsilon > 0$, and define~$\ell_n = \lfloor n^{\varepsilon/d}/2\rfloor$. Given a connected graph $G \subset \mathbb{Z}^d$, we say $G$ crosses an annulus~$x + (B_{\ell_n} \backslash B_{\ell_n/4})$ if the vertex set of~$G$ intersects both~$x + B_{\lfloor \ell_n/4\rfloor}$ and~$x+ (B_{\ell_n})^c$.

By \eqref{H2}, \eqref{H2'} and a union bound, there exists~$\delta > 0$ such that, for~$n$ large enough, the following conditions are  satisfied with probability at least~$1-\exp\{-(\log n)^{1+\delta}\}$:
\begin{itemize}
\item for any $x \in B_n$, the graph ${\bf G}_\infty\cap (x + B_{\ell_{2n}})$ has a unique component $\mathscr{C}(x)$ that crosses~$x + (B_{\ell_n} \backslash B_{\ell_n/4})$;
\item for any $x\in B_n$, any connected component of ${\bf G}\cap (x+B_{\ell_n})$ with diameter larger than $\ell_n/10$ is connected to $\mathcal C(x)$ in $x+B_{2\ell_n}$;
\item for any $x, y \in B_{n}$ with $x \sim y$, we have that $\mathscr{C}(x)$ and $\mathscr{C}(y)$ are connected together in ${\bf G}_\infty \cap (x + B_{4\ell_n})$.
\end{itemize}
In particular, all the graphs $\mathscr{C}(x)$ for $x \in B_{n-8\ell_n}$ are subgraphs of a single component $\mathscr{C}^\star$ of~${\bf G}_\infty \cap B_n$. Noting that $\mathscr{C}^\star$ intersects all boxes~$x+B_{\ell_n}$ for $x \in B_{n - 8\ell_n}$, it follows that~$\mathscr{C}^\star$ has at least~$\lfloor (n-8\ell_n)^d/(\ell_n)^d \rfloor \geq n^{d-\varepsilon}$ vertices.

Now, let~$\mathscr{C}$ be some component of~${\bf G}\cap B_n$ distinct from~$\mathscr{C}^\star$. Then,~$\mathscr{C}$ cannot cross any annulus~$x + (B_{\ell_n} \backslash B_{\ell_n/4})$ with~$x \in B_{n-8\ell_n}$. It thus follows that at least one of the following two conditions hold:
\begin{itemize}\item $\mathscr{C}$ has diameter smaller than $\ell_n$ (so $|\mathscr{C}|\leq n^\varepsilon$);
\item $\mathscr{C}$ is contained in~$B_n \backslash B_{n-9\ell_n} \subset B_n \backslash B_{n - n^\varepsilon}$ (so $|\mathscr{C}|\leq 2d\cdot n^{d-1} \cdot 9\ell_n$).
\end{itemize}
This also shows that~$\mathscr{C}^\star$ is the unique component of~${\bf G} \cap B_n$ with cardinality above $n^{d-\varepsilon}$, completing the proof.
\end{proof}

As a corollary, one obtains the following result, interesting in itself. 

\begin{corollary}
Under the hypotheses of Proposition \ref{prop:component}, 
\begin{equation}\label{conv.size.perco}
\frac{|{\bf G}_n|}{|B_n|}\quad \xrightarrow[(\P)]{n\to \infty} \quad \theta:=\mathbb P(0\in {\bf G}_\infty).
\end{equation}
\end{corollary}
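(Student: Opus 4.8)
The plan is to deduce the convergence of $|{\bf G}_n|/|B_n|$ to $\theta$ directly from Proposition~\ref{prop:component} together with the ergodic theorem. First I would fix $\varepsilon \in (0,1)$ and condition on the high-probability event of Proposition~\ref{prop:component}, which we may call $\mathcal{E}_n$; since $\mathbb{P}(\mathcal{E}_n) \geq 1 - \exp\{-(\log n)^{1+\delta}\} \to 1$, it suffices to prove the convergence on this event (the complement contributes a vanishing probability, which is all that is needed for convergence in probability). On $\mathcal{E}_n$, the chosen maximal component ${\bf G}_n$ must be the distinguished component $\mathscr{C}^\star \subseteq {\bf G}_\infty \cap B_n$: indeed $\mathscr{C}^\star$ has at least $n^{d-\varepsilon}$ vertices, while every other component of ${\bf G} \cap B_n$ has either cardinality at most $n^\varepsilon$ or is contained in $B_n \setminus B_{n-n^\varepsilon}$, hence cardinality at most $2d \cdot n^{d-1} \cdot n^\varepsilon = o(n^d)$ once $\varepsilon < 1$; so for $n$ large the unique largest component is $\mathscr{C}^\star$, and we may take ${\bf G}_n = \mathscr{C}^\star$.

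Next I would compare $|{\bf G}_n| = |\mathscr{C}^\star|$ with $|{\bf G}_\infty \cap B_n|$. On $\mathcal{E}_n$, the component $\mathscr{C}^\star$ is the unique component of ${\bf G}_\infty \cap B_n$ with cardinality exceeding $n^{d-\varepsilon}$; all other components of ${\bf G}_\infty \cap B_n$ are in particular components of ${\bf G} \cap B_n$ distinct from $\mathscr{C}^\star$, hence are either tiny (cardinality $\leq n^\varepsilon$, and there are at most $|B_n| = O(n^d)$ of them, contributing at most $O(n^{d+\varepsilon})$... which is too crude) — so instead I would argue as follows: the vertices of ${\bf G}_\infty \cap B_n$ that do not lie in $\mathscr{C}^\star$ must lie in components of ${\bf G} \cap B_n$ that either have diameter less than $\ell_n \ll n$ or are contained in the boundary shell $B_n \setminus B_{n-9\ell_n}$. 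A component of ${\bf G}_\infty \cap B_n$ of the first type, being part of the infinite cluster of ${\bf G}$, must still reach outside $B_n$; so it intersects the shell $B_n \setminus B_{n - \ell_n}$. In either case such vertices lie in $B_n \setminus B_{n - 9\ell_n}$, whose cardinality is $O(n^{d-1}\ell_n) = O(n^{d-1+\varepsilon/d}) = o(n^d)$. Therefore $0 \leq |{\bf G}_\infty \cap B_n| - |{\bf G}_n| = o(n^d)$ on $\mathcal{E}_n$, so $\big(|{\bf G}_\infty \cap B_n| - |{\bf G}_n|\big)/|B_n| \to 0$ in probability.

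It remains to show $|{\bf G}_\infty \cap B_n|/|B_n| \to \theta$. Write $|{\bf G}_\infty \cap B_n| = \sum_{x \in B_n} \mathds{1}\{x \in {\bf G}_\infty\}$. Since $\mathbb{P}$ is translation invariant and ergodic with respect to the $\mathbb{Z}^d$-shifts by \eqref{H0}, and the event $\{0 \in {\bf G}_\infty\}$ is shift-compatible in the sense that $\{x \in {\bf G}_\infty\}$ is its translate by $x$, the multidimensional (Birkhoff--Wiener--Yosida) ergodic theorem applied to the averages over the boxes $B_n$ gives
\[
\frac{1}{|B_n|}\sum_{x \in B_n} \mathds{1}\{x \in {\bf G}_\infty\} \xrightarrow[n \to \infty]{} \mathbb{E}[\mathds{1}\{0 \in {\bf G}_\infty\}] = \mathbb{P}(0 \in {\bf G}_\infty) = \theta
\]
almost surely, hence in probability. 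Combining the three steps — ${\bf G}_n = \mathscr{C}^\star$ on $\mathcal{E}_n$, the $o(n^d)$ discrepancy between $|{\bf G}_n|$ and $|{\bf G}_\infty \cap B_n|$, and the ergodic limit — yields $|{\bf G}_n|/|B_n| \to \theta$ in probability. The only mild subtlety, and the step I would be most careful about, is the middle one: correctly accounting for \emph{all} vertices of ${\bf G}_\infty \cap B_n$ outside the giant component and showing their total number is genuinely $o(n^d)$, using crucially that any component of ${\bf G}_\infty$ either is macroscopic or touches the boundary shell of $B_n$. Everything else is a routine application of ergodicity and the already-established Proposition~\ref{prop:component}.
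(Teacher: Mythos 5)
Your proposal is correct and follows essentially the same route as the paper: identify ${\bf G}_n$ with the unique large component of ${\bf G}_\infty \cap B_n$ given by Proposition~\ref{prop:component}, observe that every other component of ${\bf G}_\infty \cap B_n$ must touch the boundary of $B_n$ and hence lies in a thin shell of volume $o(n^d)$, and conclude via the ergodic theorem that $|{\bf G}_\infty \cap B_n|/|B_n| \to \theta$. The only cosmetic differences are that you phrase the shell argument using the diameter scale $\ell_n$ from the proof of the proposition rather than the $n^\varepsilon$ in its statement, and that identifying ${\bf G}_n$ with the giant component really wants $\varepsilon$ small (e.g.\ $\varepsilon<1/2$, so that $n^{d-1+\varepsilon} < n^{d-\varepsilon}$) rather than just $\varepsilon<1$ — a harmless adjustment since $\varepsilon$ is arbitrary.
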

\begin{proof}
Fix $\varepsilon \in (0,1)$. Also fix $n \in \mathbb{N}$ and assume that \eqref{eq:prop_comp1} and \eqref{eq:prop_comp2} hold. Then,~${\bf G}_n$ is a component of ${\bf G}_\infty \cap B_n$.

If $\mathscr{C}$ is any  component of ${\bf G} \cap B_n$ distinct from ${\bf G}_n$, then we must have  either~$|\mathscr{C}| < n^\varepsilon$ or~$\mathscr{C} \subset B_n\backslash B_{n-n^\varepsilon}$. If we also assume~$\mathscr{C}$ is a component of~${\bf G}_\infty \cap B_n$, it must be the case that~$\mathscr{C}$ intersects the  boundary $B_n \backslash B_{n-1}$ (since ${\bf G}_\infty$ is connected), so we necessarily have~$\mathscr{C} \subset B_n \backslash B_{n-n^\varepsilon}$. Then,
$$
|{\bf G}_\infty \cap B_n| - |{\bf G}_n| = \sum_{\substack{\mathscr{C} \text{ component}\\\text{of } {\bf G}_\infty \cap B_n:\\ \mathscr{C} \neq {\bf G}_n}} |\mathscr{C}| \leq |B_n \backslash B_{n - n^\varepsilon}|  = o(n^d).$$
The desired result now follows from noting that, by ergodicity, $|{\bf G}_\infty \cap B_n|/|B_n|$ converges in probability to $\theta$ as~$n \to \infty$.
\end{proof}

We are now in position give the proof of Theorem \ref{thm:perco}. 

\begin{proof}[Proof of Theorem \ref{thm:perco}]
Define
\begin{equation} X_n:=\frac{1}{n^d} \log \E[\uptau_{{\bf G}_n}\mid {\bf G}_n] \label{eq:def_of_X}\end{equation}
and 
\begin{equation}\label{eq:def_of_gamma}
\widetilde \gamma := \limsup_{n \to \infty} \mathbb{E}[X_n].
\end{equation}
By Lemma~\ref{lem:unif_bound}, the $(X_n)_{n\ge 1}$ are bounded random variables, and thus $\widetilde \gamma$ is finite. 
We claim that it is also positive. Indeed by~Theorem~\ref{thm:lower} (noting that~$\bfgn$ is by definition connected and has degrees bounded by~$2d$), for $n$ large enough,
\begin{equation}\label{eq:proof_of_positivity}\begin{split}
\mathbb{E}[X_n] &\geq  \frac{1}{n^d}\cdot \mathbb{E}\left[\log \mathbb{E}[\uptau_{n}\mid {\bf G}_n] \cdot \mathds{1} \left\{ |\bfgn| > \frac{\theta}{2} \, |B_n|\right\} \right]
\\[2mm]&\geq  \frac{1}{n^d} \cdot \frac{\cmmvy \theta (2n)^d}{2}  \cdot \mathbb{P}\left(|\bfgn| > \frac{\theta}{2} \, |B_n|\right)\stackrel{\eqref{conv.size.perco}}{\ge}  \cmmvy  \theta 2^{d-2}.
\end{split}\end{equation} 
Now we prove that $(X_n)_{n\ge 0}$ converges in probability to $\widetilde \gamma$. Fix~$\varepsilon > 0$, and let~$n_0$ be such that~$\E[X_{n_0}]\ge \widetilde \gamma - \varepsilon/2$. In what follows we will also assume that $n_0$ is large enough (depending on~$\varepsilon$). 
We now write for $k \in \mathbb{N}$,
$$n_k = (n_0)^{3^k},\qquad \varepsilon_k = 2^{-k}\varepsilon,\qquad \mathcal E_k = \sum_{i=1}^k  \varepsilon_i.$$
We proceed by induction. Fix~$k \ge 0$, and assume that we have already proved that
\begin{equation}\label{eq:induction_assume}
\mathbb{E}[X_{n_k}] \ge \widetilde \gamma - \mathcal E_{k+1}.\end{equation} 
Then we show that for any~$n \in  [n_{k+1},n_{k+2}]$, we have
\begin{equation}\label{eq:main_bound_perc}\begin{split}
&\mathbb{P}\left(X_n \leq \widetilde \gamma - \mathcal E_{k+1} - \varepsilon_{k+3} \right) < \frac{ \varepsilon_{k+3}}{\widetilde \gamma},\\&\hspace{1cm}\text{ so } \mathbb{E}[X_n] > \left(\widetilde \gamma - \mathcal E_{k+1}- \varepsilon_{k+3}\right)\cdot \left(1-\frac{ \varepsilon_{k+3}}{\widetilde \gamma}\right) \geq \widetilde \gamma - \mathcal E_{k+2}.
\end{split}\end{equation}
To this end, fix $n \in [n_{k+1},n_{k+2}]$. We split the box~$B_{n-n_k}$ into $N:=(\lfloor n/n_k\rfloor - 1)^ d$ disjoint boxes of side length 
$2n_k$, and denote by 
$({\bf G}^{(i)})_{1\le i\le N}$ the maximal connected components inside each of these boxes. 
Define the event 
\begin{equation}\label{eq:def_of_A} A = \left\{\text{all the $({\bf G}^{(i)})_{1\le i\le N}$ are subgraphs of ${\bf G}_n$}\right\}.\end{equation} 
By Proposition~\ref{supmult}, on~$A$ we have 
$$\mathbb{E}[\uptau_{{\bf G}_n}\mid {\bf G}_n] \geq \frac{c_0}{(2|\bfgn|)^{3N}}\cdot \prod_{i=1}^N \mathbb{E}[\uptau_{{\bf G}^{(i)}}\mid {\bf G}^{(i)}], $$
so that, with $X^{(i)}:=(1/n_k^d)\log \mathbb{E}\left[\uptau_{{\bf G}^{(i)}} \mid {\bf G}^{(i)}\right]$, 
\begin{equation}\label{eq:interm}
X_n \cdot \mathds{1}_{ A}\geq \left(\left( \frac{n_k}{n}\right)^d \cdot \sum_{i=1}^N X^{(i)} - \frac{C N \log(|\bfgn|)}{n^d}\right) \cdot \mathds{1}_{A},
\end{equation}
for some constant $C>0$. 
Now note that, by the definition of $N$,
$$\frac{C N \log(|\bfgn|)}{n^d} \leq \frac{C dn^d \log(2n)}{n^d_k\cdot n^d} \leq   \varepsilon_{k+5}, $$
at least for $n_0$ large enough. Moreover, Lemma \ref{lem:unif_bound} implies that all the $X^{(i)}$ are bounded. Therefore, 
\begin{align*}
\left( \frac{1}{N}-\left( \frac{n_k}{n}\right)^d\right) \sum_{i=1}^N X^{(i)} \leq C_1 \left( 1-N\left( \frac{n_k}{n}\right)^d\right)  \leq  C_2\left(1 - \frac{n-2n_k}{n_k} \cdot\frac{n_k}{n} \right)  \leq  \varepsilon_{k+5},
\end{align*}
for some positive constants $C_1$ and $C_2$. 
Using these estimates in \eqref{eq:interm} we get
\begin{equation*}
X_n \cdot \mathds{1}_{A} \geq \left(\frac{1}{N} \cdot \sum_{i=1}^N X^{(i)} -  \varepsilon_{k+4}\right) \cdot \mathds{1}_{ A}. 
\end{equation*}
Consequently,
\begin{equation} 
\mathbb{P}(X_n \leq \widetilde \gamma - \mathcal E_{k+1} - \varepsilon_{k+3}) \leq \mathbb{P}( A^c) + 
\mathbb{P}\left(\frac{1}{N} \cdot \sum_{i=1}^N X^{(i)} \leq \widetilde \gamma - \mathcal E_{k+1} -  \varepsilon_{k+4} \right).\label{eq:want_to_bound}
\end{equation}
We now apply Proposition \ref{prop:component} in each of the boxes and deduce that for some $\delta>0$, with probability at least $1- \exp\{-(\log n)^{1+\delta}\}$, all the ${\bf G}^{(i)}$ have 
cardinality larger than $n_k^{d-1}$ and are subgraphs of ${\bf G}_\infty$. Note also that the ${\bf G}^{(i)}$ are by definition at distance at least $n_k$ from the boundary of $B_n$, since they all belong to $B_{n-n_k}$, and by hypothesis one has $n_k \ge n^{1/9}$. Thus by applying again Proposition \ref{prop:component} in the box $B_n$, we get 
that for $n_0$ large enough, and for some $\delta>0$, 
\begin{equation}\label{PAc}
\P(A^c)\ \le\ \exp\{-(\log n)^{1+\delta}\} \ \le\ \frac{\varepsilon_{k+4}}{\widetilde \gamma}.
\end{equation}
We now bound the second term in \eqref{eq:want_to_bound} using the hypothesis \eqref{H3}. 
To this end we discretize the random variables $(X^{(i)})_{i\le N}$, which we recall are bounded by a constant $C_1>0$. 
Set 
$$\widetilde X^{(i)} : =\varepsilon_{k+5} \sum_{\ell = 0}^{\lfloor C_1/\varepsilon_{k+5}\rfloor } \mathds 1\{X^{(i)}\ge \ell \varepsilon_{k+5}\}.$$
Note that 
$$\widetilde X^{(i)} \ \le X^{(i)} \ \le \widetilde X^{(i)} + \varepsilon_{k+5},$$
and that by \eqref{H0}, all the $(X^{(i)})_{i\le N}$ have the same distribution as $X_{n_k}$. 
Therefore \eqref{eq:induction_assume} gives, using also Chebyshev's inequality, 
\begin{align}\label{covariances}
\nonumber \mathbb{P}\left(\frac{1}{N} \cdot \sum_{i=1}^N X^{(i)} \leq \widetilde \gamma - \mathcal E_{k+1} -  \varepsilon_{k+4} \right)&  
\stackrel{\eqref{eq:induction_assume}}{\le}  \P\left( \frac 1N \sum_{i=1}^N \widetilde X^{(i)} -\E[\widetilde X^{(i)}]\le  - \varepsilon_{k+5}\right) \\
&  \le  \frac{1}{N^2\varepsilon_{k+5}^2} \cdot \sum_{1\le i,j\le N} |\text{Cov}(\widetilde X^{(i)},\widetilde X^{(j)})|.
\end{align}
We now bound the covariances using \eqref{H3}. 
Note first that by using the bilinearity of the covariance and \eqref{H3}, we get that for all $i,j$,  
$$|\text{Cov}(\widetilde X^{(i)},\widetilde X^{(j)})| \le C_0 C_1^2 \frac{n_k^{2(d-2)}}{d(i,j)^{d-2}},$$
where by $d(i,j)$ we mean the distance between the two boxes containing ${\bf G}^{(i)}$ and ${\bf G}^{(j)}$. We deduce that for any fixed $i\le N$, for some constant $C>0$, 
$$\sum_{1\le j\le N} |\text{Cov}(\widetilde X^{(i)},\widetilde X^{(j)})| \le C n_k^{d-2} \cdot (\frac n{n_k})^2.$$
Summing next over $i$, and using that $n\ge n_k^3$, we obtain  
$$\frac 1{N^2}\sum_{1\le i,j\le N} |\text{Cov}(\widetilde X^{(i)},\widetilde X^{(j)})| \le C \cdot  \frac{n_k^{d-2}}{N} \left(\frac{n}{n_k}\right)^2\le C\cdot \frac 1{n_k^{d-2}}.$$
Combining this with \eqref{covariances} gives, for $n_0$ large enough, 
$$\mathbb{P}\left(\frac{1}{N} \cdot \sum_{i=1}^N X^{(i)} \leq \widetilde \gamma - \mathcal E_{k+1} -  \varepsilon_{k+4} \right)\le \frac{\varepsilon_{k+4}}{\widetilde \gamma}.$$
Together with \eqref{eq:want_to_bound} and \eqref{PAc} this proves \eqref{eq:main_bound_perc}.

One can then conclude that $(X_n)_{n\ge 0}$ converges in probability to $\widetilde \gamma$ using a general argument, which we recall for completeness. For $n\ge 1$ and $\varepsilon>0$, let $\varphi_n(\varepsilon):=\P(X_n<\widetilde \gamma - \varepsilon)+\varepsilon$. 
For~$n$ large enough we have
\begin{align*}
	\widetilde{\gamma}+\varepsilon & \ge \E[X_n]  \\&\ge (\widetilde \gamma + \sqrt{\varphi_n(\varepsilon)})\cdot \P\left(X_n > \widetilde \gamma + \sqrt{\varphi_n( \varepsilon)}\right) + 
(\widetilde \gamma-\varepsilon)\cdot \P\left(\widetilde \gamma - \varepsilon \le X_n \le \widetilde \gamma + \sqrt{\varphi_n(\varepsilon)}\right)\\ 
&\ge \widetilde \gamma(1- \varphi_n(\varepsilon))  + \sqrt{\varphi_n(\varepsilon)}\cdot  \P\left(X_n > \widetilde \gamma+ \sqrt{\varphi_n( \varepsilon)}\right) - \varepsilon,
\end{align*}
which gives 
$$\P\left(X_n > \widetilde \gamma + \sqrt{\varphi_n(\varepsilon)}\right) \le  2\varepsilon(\varphi_n(\varepsilon))^{-1/2} + \widetilde{\gamma}(\varphi_n(\varepsilon))^{1/2}. $$
Together with \eqref{eq:main_bound_perc} this gives the convergence in probability of $X_n$ towards $\widetilde \gamma$.  

The proof of the theorem is almost finished now. Fix $\varepsilon>0$. On the one hand, the Markov inequality entails 
\begin{align*}
\P\left(\frac{1}{n^d} \log \uptau_{{\bf G}_n} \ge \widetilde \gamma + \varepsilon\right) & \le \P\left(X_n > \widetilde \gamma + \frac{\varepsilon}{2}\right) + 
e^{-(\widetilde \gamma+ \varepsilon)n^d}\cdot  \E\left[\uptau_{{\bf G}_n} \mathds 1\left\{\E[\uptau_{{\bf G}_n}\mid {\bf G}]\le e^{(\widetilde \gamma + \varepsilon/2)n^d}\right\} \right]\\
& \le \P\left(X_n > \widetilde \gamma + \frac{\varepsilon}{2}\right) + \exp\left\{-\frac{\varepsilon}2n^d\right\}.
\end{align*} 
On the other hand, Lemma~\ref{lem:inverse_cheb} gives
\begin{align*}
&\P\left(\frac{1}{n^d} \log \uptau_{{\bf G}_n} \le \widetilde \gamma - \varepsilon\right) \\[2mm]& \le \P\left(X_n \le \widetilde \gamma - \frac{\varepsilon}2\right) + e^{(\widetilde \gamma - \varepsilon)n^d}\cdot \E\left[\frac{1}{\E[\uptau_{{\bf G}_n}\mid {\bf G}]} \mathds 1\left\{\E[\uptau_{{\bf G}_n}\mid {\bf G}]\ge e^{(\widetilde \gamma - \varepsilon/2)n^d}\right\} \right]\\[2mm]
&  \le \P\left(X_n \le \widetilde \gamma - \frac{\varepsilon}2\right) + \exp\left\{-\frac{\varepsilon}2n^d\right\}. 
\end{align*}
Together with \eqref{conv.size.perco} this completes the proof of \eqref{conv.tau.perco} with $\gamma = \widetilde \gamma/\theta$. The proof of the theorem is now complete. 
\end{proof}

\subsection{Description of lattice models and verification of assumptions}
As mentioned earlier, we now give definitions of five models that fit in the above framework, and indicate references where proofs of all the hypotheses of the theorem are given for each of them.\\[2mm]
\textbf{Supercritical Bernoulli bond percolation, $d \geq 2$}. This is the basic percolation model defined by the prescription that each edge of $\Z^d$ is declared open with probability~$p$ and closed with probability~$1-p$, independently for all edges, where~$p \in [0,1]$. It is thoroughly studied in the standard reference of Grimmett~\cite{G}; there it is shown that there exists~$p_c(d) \in (0,1)$ such that there is no infinite component if~$p < p_c$ and there is a unique infinite component if~$p > p_c$. Here we assume that~$p > p_c$. Proofs of~\eqref{H0} and~\eqref{H0'} can be found in~\cite{G}. Condition~\eqref{H2} is proved in~\cite{Pis} in dimension~$d \geq 3$, and in~\cite{CM} for~$d= 2$. Condition~\eqref{H3} follows readily from independence.
 \\[2mm]
\textbf{Supercritical Bernoulli site percolation, $d \geq 3$}. This is defined as above, except that vertices, rather than edges, are declared to be open of closed. Site percolation is also covered in the book~\cite{G}. Again, a critical value~$p_c(d) \in (0,1)$ separates a non-percolative phase from a phase where there is  almost surely a unique infinite component. Here we assume that~$d \geq 3$ and $p > p_c(d)$. All the references and observations provided above for supercritical bond percolation are also valid for supercritical site percolation, except that~\eqref{H2} has not been established for the case~$d=2$, which is why we exclude it. \\[2mm]
\textbf{Occupied set of random interlacements, $d \geq 3$}. The model of random interlacements has been introduced by Sznitman in~\cite{Sz}, and detailed expositions are available in~\cite{interCT} and~\cite{interDRS}. It arises as a local limit of the trace of a simple random walk on a~$d$-dimensional torus ran for an amount of time proportional to the volume of the torus. The proportionality constant~$u > 0$ and the dimension~$d$ are the two parameters of the model; as the random walk is required to be transient, one takes~$d \geq 3$. An equivalent definition of the occupied set of random interlacements is the following: it is the unique random set of vertices~$\mathcal{I}^u \subset\Z^d$ satisfying
$$\P(\mathcal{I}^u \cap K = \varnothing) = \exp\{-u\cdot \text{cap}(K)\}$$
where~$K$ is any finite set of vertices of~$\Z^d$ and~$\text{cap}(K)$ denotes the discrete capacity of~$K$ (see Chapter~6 of \cite{lawler}). The density of~$\mathcal{I}^u$ is thus an increasing function of~$u$. Here we take any~$d \geq 3$ and any~$u > 0$. Our four conditions are proved in the following references. Conditions~\eqref{H0} and~\eqref{H0'} are given respectively by Theorem 2.1 and Corollary~2.3 in~\cite{Sz}. Condition~\eqref{H2} follows from Proposition~1 in~\cite{RSap}, and condition~\eqref{H3} is equation~(2.15) in~\cite{Sz} (one also needs the fact that the capacity of~$B_n$ is of order~$n^{d-2}$; see Section~6.5 in~\cite{lawler}).
\\[2mm]
\textbf{Vacant set of random interlacements in strongly percolative regime, $d \geq 3$}. For~$d \geq 3$ and~$u > 0$, the vacant set~$\mathcal{V}^u$ of random interlacements is defined as the complement of~$\mathcal{I}^u$. There exists~$u_* \in (0,\infty)$ such that~$\mathcal{V}^u$ has an infinite component if~$u < u_*$ and no infinite component if~$u > u_*$ (\cite{Sz}, \cite{SS}; see also \cite{rath} for a short proof). Conditions~\eqref{H0} and~\eqref{H3} follow from the same conditions for~$\mathcal{I}^u$, for which references were given above. For~$d \geq 3$ and~$u \in (0,u_*)$, condition~\eqref{H0'} is proved in~\cite{T}. Condition~\eqref{H2} is only known for~$u$ small enough (in the so-called strongly percolative regime of the vacant set), so we assume this regime is in force here. The proof is given in~\cite{T2} for~$d \geq 5$ and in~\cite{DRS} for all~$d\geq 3$; see  Theorem~2.5 in the latter reference. \\[2mm]
\textbf{Excursion set of Gaussian free field in strongly percolative regime, $d \geq 3$}. The Gaussian free field in~$\mathbb{Z}^d$,~$d \geq 3$, is the centered Gaussian field~$\varphi = (\varphi_x)_{x \in \mathbb{Z}^d}$ with covariances~$\mathbb{E}[\varphi_x\varphi_y] = g(x,y)$, where~$g$ denotes the Green function of simple random walk in~$\mathbb{Z}^d$. Its excursion sets are the sets $E^{\geq h}_\varphi = \{x \in \Z^d:\; \phi_x \geq h\}$ for~ $h \in \mathbb{R}$. There is again a critical value~$h_* \in \mathbb{R}$ so that~$E^{\geq h}_\varphi$ almost surely has an infinite component when~$h < h_*$ and only finite components if~$h > h_*$ (in fact it is also known that~$h_* > 0$). This has been proved in~\cite{BLM} for~$d =3$ and in~\cite{RS} for any~$d \geq 3$. Our condition~\eqref{H0} is proved in the paragraph preceding Lemma~1.5 in~\cite{RS}, and~\eqref{H0'} is verified in~\cite{RS}, Remark~1.6. Condition~\eqref{H2} is included in Theorem~2.7 in~\cite{DRS}; again this is only known in a strongly percolative regime where~$h$ is smaller than a constant~$\bar{h} \leq h_*$, so we assume this regime is in force here. Finally,~\eqref{H3} is given in Proposition~1.1 in~\cite{PR}.
\subsection{Random geometric graph}
The random geometric graph in $\R^d$, $d\ge 2$, is the random graph whose vertex set is a Poisson point process of intensity one, and the edge set is defined with the rule that two vertices 
are connected by an edge if and only if they are at Euclidean distance smaller than some fixed constant $R>0$. It has been shown that there exists a critical value $R_c>0$ such that the graph almost surely has only finite components if~$R < R_c$ and a uniue infinite component if $R>R_c$. This result and an in-depth treatment in the model can be found in the book of Penrose~\cite{Pen}. (We observe that the model is parametrized in a different manner there than the one we take here: the author fixes the radius for connectivity~$R = 1$ and takes the intensity of the Poisson process of vertices as the parameter of the model, which is denoted~$\lambda$ throughout the book. This choice is equivalent to ours after a rescaling of~$\mathbb{R}^d$).

As mentioned in the Introduction, we assume~$d \geq 2$, $R > R_c$, and let~${\bf G}_n$ be the connected component with highest cardinality in the graph restricted to~$B_n = [-n,n]^d$. Here we will prove that the statement of Theorem~\ref{thm:thm} holds for these graphs. The proof is very similar to that of Section~\ref{ss:general}, so we will only describe the points in which the proofs differ.

Below we state a proposition that contains all the properties that we will need concerning the graphs~${\bf G}_n$.  Before doing so, let us give two definitions. First, the metric diameter of a graph embedded in~$\mathbb{R}^d$ is defined as the supremum of the~$\ell_\infty$-distance between vertices of the graph. This is not to be confused with the graph-theoretic diameter. Second, consider the modified version of the random geometric graph on~$\mathbb{R}^d$ such that a vertex is artificially placed at the origin (and other vertices and edges are placed as before); let~$\theta_R$ be the probability that the vertex at the origin is in an infinite component. Then (see Chapter~9 of~\cite{Pen}), we have~$\theta_R > 0$ if and only if~$R > R_c$.
\begin{proposition}\label{prop:properties_RGG}
For any~$d \geq 2$,~$R > R_c$ and~$\varepsilon > 0$, there exists~$\delta > 0$ such that for~$n$ large enough the following holds with probability larger than~$1-\exp\{-(\log n)^{1+\delta}\}$. The maximal component~${\bf G}_n$ of the random geometric graph with parameter~$R$ on~$B_n$ has metric diameter larger than~$n$ and~${\displaystyle |{\bf G}_n|/(2n)^d \in ((1-\varepsilon) \cdot \theta_R, (1+\varepsilon) \cdot\theta_R)}$. Moreover, any other component has metric diameter smaller than~$(\log n)^2$ and cardinality smaller than~$\varepsilon n^d$.
\end{proposition}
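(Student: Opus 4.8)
The plan is to transcribe the proof of Proposition~\ref{prop:component} (together with the argument in the corollary that follows it) to the continuum, replacing the lattice hypotheses~\eqref{H2} and~\eqref{H2'} by their random-geometric-graph counterparts, all of which are available in the monograph of Penrose~\cite{Pen}. Write $\mathcal C_\infty$ for the (a.s.\ unique, since $R>R_c$) infinite component of the random geometric graph on all of $\mathbb R^d$, so that $\theta_R=\mathbb P(0\in\mathcal C_\infty)>0$ under the Palm measure. Fix $\varepsilon\in(0,1)$, put $\ell_n=\lfloor n^{\varepsilon/(2d)}\rfloor$, and tile $B_n$ by cubes of side $2\ell_n$.

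First I would collect the local inputs. From the connectivity estimates for $\mathcal C_\infty$ in~\cite{Pen} (Chapters~9--10) one obtains the continuum counterpart of~\eqref{H2}--\eqref{H2'}: there is $\Delta>0$ such that, in a cube $Q$ of side $\ell$, except on an event of probability at most $\exp\{-(\log\ell)^{1+\Delta}\}$, the set $\mathcal C_\infty$ meets $Q$, the random geometric graph restricted to $Q$ has a unique component $\mathscr C(Q)$ of metric diameter $\geq\ell/10$ (which then lies in $\mathcal C_\infty$), and $\mathscr C(Q)$ and $\mathscr C(Q')$ are joined inside $Q\cup Q'$ for adjacent cubes $Q,Q'$. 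From the exponential decay of the size of the finite clusters of the supercritical random geometric graph (also in~\cite{Pen}, Chapter~10) one gets: except on an event of probability at most $\exp\{-(\log n)^{1+\Delta}\}$, no finite whole-space cluster of cardinality $\geq(\log n)^{2}$ meets $B_n$. A union bound over the $O((n/\ell_n)^d)$ cubes turns the first input into a single event $\mathcal G_n$ of probability at least $1-\exp\{-(\log n)^{1+\delta}\}$ on which the above holds in every cube of the tiling lying at $\ell_\infty$-distance $\geq 20\ell_n$ from $\partial B_n$; exactly as in Proposition~\ref{prop:component}, the auxiliary cubes $Q\cup Q'$ are allowed to protrude slightly outside $B_n$, which is harmless since $\mathcal C_\infty$ lives on all of $\mathbb R^d$.

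On $\mathcal G_n$ the argument of Proposition~\ref{prop:component} applies verbatim: the components $\mathscr C(Q)$ of the interior cubes are pairwise connected inside $B_n$, hence contained in a single component $\mathscr C^\star$ of the random geometric graph on $B_n$, which meets every interior cube and is contained in $\mathcal C_\infty$; in particular $\mathscr C^\star$ has metric diameter $\geq 2n-O(\ell_n)>n$. As in the corollary after Proposition~\ref{prop:component}, a component $\mathscr C\neq\mathscr C^\star$ of $\mathcal C_\infty\cap B_n$ must reach $\partial B_n$ (else it would be a finite component of $\mathcal C_\infty$); and a crossing argument --- if $\mathscr C$ meets $B_{n-20\ell_n}$ and has metric diameter $\geq 8\ell_n$, then the connected path realizing a pair of far-apart points of $\mathscr C$ crosses an interior cube $Q$, so $\mathscr C$ contains $\mathscr C(Q)\subseteq\mathscr C^\star$, a contradiction --- shows that $\mathscr C$ is either of metric diameter $<8\ell_n$ or confined to the shell $B_n\setminus B_{n-20\ell_n}$; together with "reaches $\partial B_n$" this forces $\mathscr C\subseteq B_n\setminus B_{n-20\ell_n}$ in both cases. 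Hence $|\mathcal C_\infty\cap B_n|-O(n^{d-1}\ell_n)\leq|\mathscr C^\star|\leq|\mathcal C_\infty\cap B_n|$, and since the ergodic theorem for the Poisson process (\cite{Pen}, Chapter~9) gives $|\mathcal C_\infty\cap B_n|/(2n)^d\to\theta_R$ in probability, on an event of probability tending to one we obtain $|\mathscr C^\star|/(2n)^d\in((1-\varepsilon)\theta_R,(1+\varepsilon)\theta_R)$, so $\mathscr C^\star={\bf G}_n$ for $n$ large; this gives all the assertions about ${\bf G}_n$. For the other components: if $\mathscr C\neq{\bf G}_n$ lies in a finite whole-space cluster then on the above event $|\mathscr C|<(\log n)^2$, hence metric diameter $<(\log n)^2R$; and if $\mathscr C\subseteq\mathcal C_\infty$ then, as just seen, $\mathscr C$ has metric diameter $<8\ell_n$ or is confined to the shell. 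To obtain the precise bound "metric diameter $<(\log n)^2$" stated in the proposition one then (i) rules out a macroscopic component inside the shell and (ii) upgrades the "$<8\ell_n$" to "$<(\log n)^2$"; both follow from the finer description of the largest and second-largest components of the random geometric graph in a box in~\cite{Pen} (Chapter~10), according to which, with the required probability, the second-largest component has polylogarithmic cardinality. (I note that for Theorem~\ref{thm:thm} the weaker conclusion "every non-giant component has metric diameter $<n^{\varepsilon'}$ or is confined to $B_n\setminus B_{n-20\ell_n}$", which the renormalization above produces on its own, would already suffice, since the sub-blocks used there lie in $B_{n-n_k}$ with $n_k\geq n^{1/9}$.) Finally, since a connected random geometric graph on $k$ vertices has metric diameter at most $kR$, a component of metric diameter $<(\log n)^2$ also has cardinality $<\varepsilon n^d$ for $n$ large.

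The main obstacle is point~(i) above: ruling out a macroscopic component of $\mathcal C_\infty\cap B_n$ distinct from the giant and hugging $\partial B_n$. In the lattice models this is automatic, since such a component would be cut off inside $B_n$ by a long $*$-connected surface in the closed set, an event of exponentially small probability; for the Poisson Boolean model the dual object is less transparent, so the cleanest route is to import the required box-statement about the largest and second-largest components directly from~\cite{Pen}. Apart from this, the proof is a routine transcription of the lattice case; in particular the unboundedness of the degrees --- the one structural novelty of this model --- plays no role in the present proposition, intervening only later through the application of Theorem~\ref{thm:lower}, where it is dispatched by discarding the negligible-probability event that some vertex of $B_{2n}$ has more than $(\log n)^2$ neighbours.
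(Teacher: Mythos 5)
Your route differs on the surface from the paper's, which disposes of Proposition~\ref{prop:properties_RGG} in one line by citing Proposition~10.13, Theorem~10.19 and Theorem~10.20 of~\cite{Pen}; but at the two points you yourself identify as decisive --- ruling out a second macroscopic component of the giant cluster hugging $\partial B_n$, and the $(\log n)^2$ diameter bound for non-giant components --- you end up importing exactly those box statements from Chapter~10 of~\cite{Pen}. So the continuum transcription of Proposition~\ref{prop:component} that occupies most of your write-up is scaffolding that the final argument does not actually need: once you quote the Penrose results on the largest and second-largest components, they already give the whole proposition (law of large numbers for $|{\bf G}_n|$, diameter of the giant, polylogarithmic order of everything else), and the renormalization adds nothing beyond what it adds in the lattice case, where no such off-the-shelf statement was available. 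Your side remark that the weaker conclusion (non-giant components of diameter $<n^{\varepsilon'}$ or confined to a boundary shell) would suffice for the application in Theorem~\ref{thm:thm} is correct, but it does not discharge the proposition as stated, and you rightly do not claim it does.

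One intermediate input of yours is wrong as stated and should not survive into a final version: the claim that, except on an event of probability $\exp\{-(\log n)^{1+\Delta}\}$, no finite whole-space cluster of \emph{cardinality} at least $(\log n)^2$ meets $B_n$. In supercritical continuum percolation the tail of the finite-cluster size decays only stretched-exponentially, of order $\exp\{-c\,k^{(d-1)/d}\}$; for $d=2$ this gives a per-point cost $n^{-c}$, and in fact the largest finite cluster meeting $B_n$ is then typically itself of order $(\log n)^{d/(d-1)}=(\log n)^2$, so your claimed event does not have the stated probability and sits exactly at the threshold you want to rule out. The correct local input is the exponential decay of the \emph{metric diameter} of finite clusters (crossing an annulus of width $k$ costs $e^{-ck}$), from which a diameter bound of order $\log n$, and a fortiori $(\log n)^2$, follows after the union bound; alternatively one simply reads the bound on the second-largest component directly from~\cite{Pen}, as the paper does. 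With that correction your argument is sound, but it reduces to the paper's citation plus redundant machinery.
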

\begin{proof}
This follows from putting together Proposition~10.13, Theorem~10.19, and Theorem~10.20 in~\cite{Pen}. 
\end{proof}
Note that Proposition~\ref{prop:properties_RGG} immediately gives
\begin{equation*}
\frac{|{\bf G}_n|}{(2n)^d} \xrightarrow[(\mathbb{P})]{n \to \infty} \theta_R.
\end{equation*}
We define~$X_n$ and~$\widetilde{\gamma}$ as in~\eqref{eq:def_of_X} and~\eqref{eq:def_of_gamma}. We would now like to show that~$\widetilde{\gamma} > 0$. However, unlike the lattice models considered earlier, here no universal upper bound is available for the degrees in~${\bf G}_n$. To remedy this, we use the following result.
\begin{lemma}
For any~$d \geq 2$ there exists~$K > 0$ such that~${\bf G}_n$ has a spanning tree with degrees bounded by~$K$.
\end{lemma}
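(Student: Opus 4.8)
The plan is to construct a bounded-degree spanning tree by hand, using a tiling of $\mathbb{R}^d$ into small cubes together with the elementary observation that all vertices of ${\bf G}_n$ lying in one small cube form a clique. Concretely, I would fix the side length $s := R/(2\sqrt{d})$ and tile $\mathbb{R}^d$ by the half-open cubes $Q_k := \prod_{i=1}^d [k_i s,(k_i+1)s)$, $k \in \mathbb{Z}^d$. Since two points in a common cube are at Euclidean distance at most $s\sqrt{d} = R/2 < R$, any two vertices of ${\bf G}_n$ inside the same cube are joined by an edge of ${\bf G}_n$. Hence for every cube $Q$ meeting ${\bf G}_n$ one can fix a Hamiltonian path $P_Q$ on the vertices of ${\bf G}_n \cap Q$ (a path through all of them, which exists since they form a clique, and which reduces to a single vertex if $|{\bf G}_n \cap Q| = 1$); in $P_Q$ every vertex has degree at most $2$.

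Next I would glue these local paths together through an auxiliary finite graph $H$: its vertices are the cubes $Q$ with ${\bf G}_n \cap Q \neq \varnothing$, and $Q \sim_H Q'$ whenever some vertex of ${\bf G}_n \cap Q$ is ${\bf G}_n$-adjacent to some vertex of ${\bf G}_n \cap Q'$. Two facts about $H$ are needed. First, $H$ is connected: any ${\bf G}_n$-path between two vertices induces, via the map sending a vertex to its cube, a walk in $H$, and ${\bf G}_n$ is connected by definition. Second, $H$ has maximum degree at most a constant $D = D(d,R)$: if $Q \sim_H Q'$ then $Q$ and $Q'$ contain points at $\ell_\infty$-distance less than $R$, which forces the index of $Q'$ to lie within $\ell_\infty$-distance $R/s + 1$ of that of $Q$, leaving at most $D := (2\lceil R/s + 1\rceil + 1)^d$ possibilities. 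I would then fix a spanning tree $T_H$ of $H$ and, for each edge $\{Q,Q'\}$ of $T_H$, a single witnessing edge $e_{QQ'}$ of ${\bf G}_n$ with one endpoint in $Q$ and the other in $Q'$.

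The candidate spanning tree is $T := \big(\bigcup_Q E(P_Q)\big) \cup \{e_{QQ'} : \{Q,Q'\} \in E(T_H)\}$, viewed as a subgraph of ${\bf G}_n$ on the full vertex set. To see it is a spanning tree I would check connectedness and a count of edges. Connectedness: inside each cube $P_Q$ links all of ${\bf G}_n \cap Q$, and since $T_H$ spans the relevant cubes the witnessing edges link these pieces into one. Edge count: with $m$ the number of cubes meeting ${\bf G}_n$, the paths give $\sum_Q (|{\bf G}_n \cap Q| - 1) = |{\bf G}_n| - m$ edges and $T_H$ gives $m-1$ witnessing edges; these edge sets are pairwise disjoint (a path edge lies within one cube, a witnessing edge joins two distinct cubes, and a witnessing edge determines its pair of cubes), so $|E(T)| = |{\bf G}_n| - 1$, and a connected graph on $|{\bf G}_n|$ vertices with that many edges is a tree.

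For the degree bound, a vertex $v \in {\bf G}_n \cap Q$ has at most $2$ incident edges in $P_Q$, and every witnessing edge incident to $v$ corresponds to an edge of $T_H$ at $Q$, of which there are at most $\deg_H(Q) \leq D$; hence $\deg_T(v) \leq 2 + D =: K$, a constant depending only on $d$ and $R$. Since nothing in the argument uses randomness — only that ${\bf G}_n$ is a finite connected geometric graph with connection radius $R$ — the bound holds for every realization and every $n$. The one place that genuinely needs care, and the reason I would not use the more obvious "star within each cube," is the degree bookkeeping: a star around a representative vertex of a cube could have arbitrarily large degree, whereas routing through a Hamiltonian path caps the intra-cube contribution at $2$, which is exactly what lets the global bound go through.
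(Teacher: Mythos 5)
Your construction is correct, and it is a genuinely different argument from the one the paper uses. The paper invokes the spanning tree of ${\bf G}_n$ minimizing total Euclidean edge length (citing Lemma~2.12 of the Beveridge--Dudek--Frieze--M\"uller--Stojakovi\'c paper for $d=2$, where $K=5$), and bounds its degrees by an exchange argument: a vertex of large degree in the minimal tree would have two tree-neighbours within distance $R$ of each other, allowing a rearrangement that strictly decreases total length. Your route instead tiles $\mathbb{R}^d$ by cubes of side $R/(2\sqrt d)$, uses that each cube's vertices form a clique to lay down Hamiltonian paths (intra-cube degree at most $2$), and glues the cubes along a spanning tree of the bounded-degree quotient graph $H$ via one witnessing edge per $T_H$-edge; the edge count $(|{\bf G}_n|-m)+(m-1)=|{\bf G}_n|-1$ together with connectedness then certifies a tree, and the degree bound $K=2+D$ follows since a vertex can meet at most $\deg_{T_H}(Q)\le D$ witnessing edges. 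All the steps check out (the half-open cubes make the cube of a vertex unique, so a witnessing edge indeed determines its $T_H$-edge, and $H$ is connected because ${\bf G}_n$-paths project to $H$-walks). What each approach buys: the paper's MST argument is shorter given the cited lemma and yields a much better, kissing-number-type constant; yours is self-contained and elementary, needing no external reference or geometric exchange argument, at the price of a constant exponential in $d$ --- and note that with your choice $s=R/(2\sqrt d)$ the ratio $R/s=2\sqrt d$ is scale-free, so your $D$ in fact depends only on $d$, not on $R$, just as in the paper's version.
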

The above statement for~$d=2$ (and~$K = 5$) is Lemma~2.12 in~\cite{tobias}. The proof easily generalizes to any dimension (with dimension-dependent~$K$). The idea is to take the spanning tree of~${\bf G}_n$ in which the sum of the lengths of the edges is the smallest possible, and to show that this tree has degrees bounded by a dimension-dependent constant. For this, one argues by contradiction: if the tree had a vertex whose degree was too large, then the edges in the ball of radius~$R$ around this vertex could be rearranged so as to produce a spanning tree with smaller total edge length.

With this result at hand, we prove that~$\widetilde{\gamma}$ is positive with the same computation as in~\eqref{eq:proof_of_positivity}, replacing~${\bf G}_n$ by its spanning tree with bounded degrees. From this point on, the proof of Theorem~\ref{thm:thm} for~$({\bf G}_n)$ proceeds in the same way as the one we gave for the lattice models; the only difference is that, in giving a lower bound to the probability of the event~$A$ defined in~\eqref{eq:def_of_A}, we use Proposition~\ref{prop:properties_RGG} instead of Proposition~\ref{prop:component} (also note that the computation involving covariances is unnecessary in this case, since graphs inside disjoint boxes are independent).

\section{Galton-Watson trees}\label{s:Galton-Watson}
In this section we consider a supercritical Galton-Watson tree with offspring distribution having a finite second moment. This means that if $\nu$ is the law of the number of individuals at the first generation of the tree, then $m:=\sum_{k\in \N} k\nu(k)$ is larger than one, and $\sigma^2:=\sum_{k\in \N} k^2\nu(k)$ is finite. 
We let~$Z_0 = 0$ and for $n\ge 1$, we let $Z_n$ be the number of individuals at generation (height) $n$, and let ${\bf G}_n$ be the subtree of individuals belonging to the first $n$ generations (including the root, which is in generation zero). In particular 
$|{\bf G}_n| = 1+ Z_1+\dots + Z_n$. We also denote the whole tree by ${\bf G}_\infty$. 
The events of non-extinction (or survival) and extinction are defined respectively by 
$$\text{Surv} = \left\{|{\bf G}_\infty| = \infty\right\},\quad\text{and}\quad \text{Ext} = \text{Surv}^c = \left\{|{\bf G}_\infty | < \infty\right\}.$$
Letting $v_n = 1 + m+ \cdots m^n$, it is well known that there exists a nonnegative random variable $W_\infty$ satisfying
\begin{equation}\label{eq:conv_l2}\frac{Z_n}{m^n} \to W_\infty,\quad \frac{|{\bf G}_n|}{v_n} \to W_\infty, \quad \text{almost surely and in $L^2$},\end{equation}
and moreover, \begin{equation}\label{eq:value_w}\mathbb{P}\left(W_\infty = 0\mid \text{Ext}\right) = 1,\quad\text{and}\quad \mathbb{P}\left(W_\infty > 0\mid \text{Surv}\right) = 1.\end{equation}
The convergence of~$Z_n/m^n$ (almost surely and in~$L^2$) and the two equalities in~\eqref{eq:value_w} are proved in Chapter~1 of~\cite{AN}. The almost sure convergence of~$|{\bf G}_n|/v_n$ is then immediate, and the convergence in~$L^2$ of~$|{\bf G}_n|/v_n$ can be obtained from Minkowski inequality:
\begin{align*}
\left(\mathbb{E}\left[\left(\frac{|{\bf G}_n|}{v_n} - W_\infty\right)^2\right]\right)^{1/2} \leq \sum_{i=0}^n \frac{m^i}{v_n}\left( \mathbb{E} \left[ \left( \frac{Z_i}{m^i} - W_\infty\right)^2\right] \right)^{1/2} \xrightarrow{n \to \infty} 0.
\end{align*}

We now define 
$$Y_n =\frac{\log \mathbb{E}[\uptau_{{\bf G}_n} \mid {\bf G}_n]}{m^n}, \quad\quad X_n = \frac{\log \mathbb{E}[\uptau_{{\bf G}_n} \mid {\bf G}_n]}{|{\bf G}_n|},$$
and  
$$\widetilde{\gamma} := \limsup_{n\to\infty} \mathbb{E}[Y_n].$$
We first show the following:
\begin{claim} ${\displaystyle \mathbb{E}[Y_n] \xrightarrow{n \to \infty} \widetilde{\gamma} > 0}$.
\end{claim}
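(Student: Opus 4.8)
The plan is to establish the two parts of the claim—positivity of $\widetilde\gamma$ and convergence of $\mathbb{E}[Y_n]$ to it—separately, the second being the substantive one. For positivity, I would argue as in \eqref{eq:proof_of_positivity}: on the survival event $\mathrm{Surv}$, with the conditioning that puts $W_\infty$ bounded away from $0$, the tree ${\bf G}_n$ has at least (a constant times) $m^n$ vertices; since a Galton--Watson tree truncated at height $n$ has bounded degrees only if $\nu$ has bounded support (which we are \emph{not} assuming), one cannot directly invoke the \cite{MMVY} bound. Instead I would use the second bound in Theorem~\ref{thm:lower}, which requires no degree bound: on the event $\{|{\bf G}_n| \geq \tfrac12 W_\infty v_n\}\cap\{W_\infty > \eta\}$ (which has probability bounded below uniformly in $n$ for suitable small $\eta>0$, by \eqref{eq:conv_l2} and \eqref{eq:value_w}), we get $\log\mathbb{E}[\uptau_{{\bf G}_n}\mid {\bf G}_n] \geq c\,|{\bf G}_n|/(\log|{\bf G}_n|)^{1+\varepsilon} \geq c'\,m^n/n^{1+\varepsilon}$. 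Dividing by $m^n$ this tends to $0$, so this crude bound is \emph{not} enough; I would instead need a genuinely linear-in-$|{\bf G}_n|$ lower bound. The fix is to extract from ${\bf G}_n$ a subtree of bounded degree and still of size of order $m^n$—for instance, iteratively keep at most two children of each vertex; under a second-moment (indeed just a first-moment supercriticality) assumption the pruned tree is still supercritical (its mean offspring exceeds $1$ once $\nu(\{0,1,2\}^c)$ charges enough mass, which may fail, so more care is needed: one keeps $\min(k, k_0)$ children for $k_0$ large enough that $\sum_k \min(k,k_0)\nu(k) > 1$, possible since $m>1$). This pruned tree ${\bf G}_n'\subset{\bf G}_n$ has degrees bounded by $k_0+1$ and, on survival of the pruned process, size $\gtrsim \bar m^{\,n}$ with $\bar m>1$; but $\bar m$ may be smaller than $m$, so this only gives $Y_n \gtrsim (\bar m/m)^n \to 0$ again.

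So the right approach for positivity is not to prune but to use Proposition~\ref{supmult} together with the bounded-degree lower bound applied at a \emph{fixed finite scale}. Concretely: condition on ${\bf G}_n$; inside it, on survival, one finds order-$m^n$ disjoint copies of small subtrees each isomorphic to (a truncation of) the Galton--Watson tree, say at height $h$ for $h$ fixed; apply Proposition~\ref{supmult} with these as $G_1,\dots,G_N$; since each $G_i$ has bounded size (at most roughly $\sigma\text{-controlled}$, but in any case we may take only those $G_i$ of bounded degree and with at least two vertices), the \cite{MMVY} bound gives $\mathbb{E}[\uptau_{G_i}] \geq e^{c}$ for a universal $c>0$, and the polynomial prefactor $(2|{\bf G}_n|^3)^{-(N+1)}$ contributes only $\exp\{-O(N\log|{\bf G}_n|)\} = \exp\{-O(m^n \cdot n)\}$—which, divided by $m^n$, still does not vanish only if we are careful: $O(m^n n)/m^n = O(n)\to\infty$. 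This is the genuine obstacle and it is the same one handled in the percolation proof via the choice $n = n_0^{3^k}$ making $\log|{\bf G}_n| = O(3^k \log n_0)$ negligible against $N \sim (n/n_0)$; here I expect the resolution to be identical in spirit but I would defer it, since the \textbf{convergence} argument and the positivity argument share machinery: establishing $\mathbb{E}[Y_n] \to \widetilde\gamma$ along a sparse subsequence $n_k = n_0 + k$ or geometric scales, then spreading to all $n$, is exactly where the recursive block decomposition of the tree (each ${\bf G}_n$ contains $Z_1$ roughly-i.i.d.\ copies of ${\bf G}_{n-1}$ hanging off the root) is used; I would mimic the percolation proof's inductive scheme \eqref{eq:induction_assume}--\eqref{eq:main_bound_perc}, replacing boxes by the root's subtrees, replacing the spatial covariance bound \eqref{H3} by genuine independence of the subtrees (given $Z_1$), and replacing the volume control of Proposition~\ref{prop:component} by the $L^2$ convergence \eqref{eq:conv_l2}; positivity then follows because along that scheme one shows $\mathbb{E}[Y_{n_k}]$ is non-decreasing up to $o(1)$ and $\mathbb{E}[Y_1] > 0$ by Lemma~\ref{lem:unif_bound}'s converse, i.e.\ again Theorem~\ref{thm:lower} applied to the bounded-size, bounded-degree tree ${\bf G}_1$ on the event $Z_1 \geq 1$.

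Let me restate the plan cleanly. First, I would fix $\eta>0$ with $\mathbb{P}(W_\infty > \eta) > 0$ and, throughout, work on the event $\mathrm{Surv}$ with the normalization that $W_\infty$ (equivalently $|{\bf G}_n|/v_n$) stays in a compact subset of $(0,\infty)$ with probability arbitrarily close to $\mathbb{P}(\mathrm{Surv})$; combined with $v_n \asymp m^n$ this makes $Y_n$ and $X_n$ comparable (up to the random factor $W_\infty$) and makes the boundedness of $Y_n$, needed for $\widetilde\gamma < \infty$, follow from Lemma~\ref{lem:unif_bound} (using that $|E({\bf G}_n)| = |{\bf G}_n| - 1$ for a tree). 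Second, for the lower bound $\widetilde\gamma > 0$: on $\{W_\infty > \eta\}$, for $n$ large ${\bf G}_n$ contains at least $\frac{\eta}{2} m^{n-h}$ disjoint subtrees each equal to an independent copy of ${\bf G}_h$ (the height-$h$ truncation), for any fixed $h$; restrict to those copies that have at least two vertices and degrees at most some $D_h$ (a positive fraction, in expectation, for $h$ fixed), call there number $N$, so $N \gtrsim m^{n}$ with positive probability; apply Proposition~\ref{supmult} and the first bound of Theorem~\ref{thm:lower} to get
$$\log\mathbb{E}[\uptau_{{\bf G}_n}\mid{\bf G}_n] \;\geq\; -\,(N+1)\log(2|{\bf G}_n|^3) \;+\; N\,\bar c\,c_{\min},$$
and choose $h$ large (hence $\bar c c_{\min}$ is some fixed positive number) — then divide by $m^n$; the second term gives a positive constant, the first gives $-O(N \log|{\bf G}_n|)/m^n = -O(n)$, so, exactly as in the percolation proof, I only get positivity after running the full inductive/self-similar argument on geometric scales $n_k = n_0^{3^k}$, where the $\log|{\bf G}_n|$ losses are swamped. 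Third, the convergence $\mathbb{E}[Y_n]\to\widetilde\gamma$: decompose ${\bf G}_n$ at the root into its $Z_1$ subtrees, which conditionally on $Z_1$ are i.i.d.\ copies of ${\bf G}_{n-1}$; use Proposition~\ref{supmult} to lower-bound $\mathbb{E}[\uptau_{{\bf G}_n}\mid {\bf G}_n]$ by (prefactor)$\times\prod_{i} \mathbb{E}[\uptau_{{\bf G}_{n-1}^{(i)}}\mid{\bf G}_{n-1}^{(i)}]$, take logs, divide by $m^n$, and get $Y_n \gtrsim \frac{1}{m}\cdot\frac{1}{Z_1}\sum_{i=1}^{Z_1} Y_{n-1}^{(i)} - (\text{error})$ on the event that $Z_1/m$ is close to $1$ and the subtrees' sizes are well-behaved; by the strong law / $L^2$-convergence and a discretization-plus-independence argument (replacing the covariance bound of \eqref{H3}), the average $\frac1{Z_1}\sum_i Y_{n-1}^{(i)}$ concentrates near $\mathbb{E}[Y_{n-1}]$, iterate the induction exactly along $n_k = n_0^{3^k}$ as in \eqref{eq:main_bound_perc}, and conclude convergence in probability of $Y_n$, hence of $\mathbb{E}[Y_n]$, to $\widetilde\gamma$; positivity is then inherited from $\mathbb{E}[Y_1] > 0$. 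The main obstacle, as flagged, is controlling the polynomial prefactor losses $(2|{\bf G}_n|^3)^{N+1}$ in Proposition~\ref{supmult} against the linear-in-$m^n$ gain — resolved, just as for the lattice case, by working along the sparse scales $n_k$ and using that the size $|{\bf G}_{n_k}| = O(m^{n_k})$ makes $\log|{\bf G}_{n_k}| = O(n_k)$ negligible compared with $N = \Theta(m^{n_{k+1}-n_k})\to\infty$; the secondary obstacle is handling the randomness of $Z_1$ (and the possibility $Z_1 = 0$), which is dealt with by restricting to $\mathrm{Surv}$ and to the high-probability event that $Z_1/m$ and all relevant $|{\bf G}_m|/v_m$ ratios are close to their limits.
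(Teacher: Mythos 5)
Your exploration circles the right objects (the decomposition of ${\bf G}_n$ at the root into the $Z_1$ subtrees, plus Proposition~\ref{supmult}), but the plan you finally commit to does not close either half of the claim, and the step you explicitly dismiss is exactly the one the argument needs. The paper's proof runs as follows: the root decomposition, taken \emph{in expectation} (no concentration, and no event on which ``$Z_1/m$ is close to $1$'' --- $Z_1$ is a single draw from $\nu$ and is not close to $m$), combined with the branching property, yields the near-monotonicity \eqref{eq:first_bound_Y}, $\mathbb{E}[Y_n]\ge \mathbb{E}[Y_{n-1}]-n^4/m^n$; the prefactor loss in Proposition~\ref{supmult} is only $O(Z_1\log|{\bf G}_n|)$ because there are $Z_1$ (not exponentially many) subgraphs, so after dividing by $m^n$ the errors are summable with exponentially small tails. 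Positivity then comes from the ``crude'' degree-free bound of Theorem~\ref{thm:lower} that you discarded: it gives $\mathbb{E}[Y_{n_0}]\ge c/n_0^2$ as in \eqref{eq:using_our_result}, and since the cumulative error beyond $n_0$ is exponentially small while $c/n_0^2$ is only polynomially small, near-monotonicity propagates this to $\widetilde\gamma\ge c/n_0^2-\sum_{j>n_0}j^4/m^j>0$. Your substitute routes for positivity both fail: packing ${\bf G}_n$ with $N\asymp m^n$ bounded-size subtrees loses $N\log|{\bf G}_n|\asymp n\,m^n$ against a gain of only $O(m^n)$, and no choice of sparse scales $n_k$ repairs this --- in the lattice proof the per-block gain $n_k^d$ grows with the scale while the per-block loss is $O(\log n)$, whereas here your per-block gain is a constant; and ``positivity inherited from $\mathbb{E}[Y_1]>0$'' does not follow from monotonicity up to $o(1)$ errors, since the total accumulated error $\sum_{n\ge 2}Cn^4/m^n$ is a constant that may well exceed $\mathbb{E}[Y_1]$ (think of $m$ close to $1$).

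For the convergence $\mathbb{E}[Y_n]\to\widetilde\gamma$ no concentration or multi-scale induction is needed: near-monotonicity with summable errors gives $\liminf_N\mathbb{E}[Y_N]\ge\mathbb{E}[Y_n]-\delta_n$ with $\delta_n\to0$, hence $\liminf\ge\limsup=\widetilde\gamma$. Your proposed route --- prove convergence in probability of $Y_n$ to $\widetilde\gamma$ by an \eqref{eq:main_bound_perc}-style induction at the root --- is unsound on two counts. First, conditionally on $Z_1$ the average $\frac{1}{Z_1}\sum_i Y^{(i)}_{n-1}$ has a bounded (random, possibly equal to $1$) number of terms, so there is no law of large numbers at the root; the paper only uses such concentration later, in Claim~\ref{cl:first_Y}, by cutting at generation $n'=\lfloor n/2\rfloor$, where $Z_{n'}\to\infty$ on survival. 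Second, the target statement is false: since $Y_n=X_n\cdot|{\bf G}_n|/m^n$ and $|{\bf G}_n|/m^n\to \frac{m}{m-1}W_\infty$ by \eqref{eq:conv_l2}, $Y_n$ converges to the random limit $\widetilde\gamma\,W_\infty$ (non-degenerate unless the offspring law is deterministic), not to the constant $\widetilde\gamma$, so ``convergence in probability of $Y_n$, hence of $\mathbb{E}[Y_n]$'' cannot be carried out. (A related small slip: $Y_n$ is not a bounded random variable, since $|{\bf G}_n|/m^n$ is not; finiteness of $\widetilde\gamma$ instead follows from $\mathbb{E}[Y_n]\le(1+2\lambda)\,v_n/m^n$ via Lemma~\ref{lem:unif_bound}.)
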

\begin{proof}
On $\{Z_{1} = k\}$ for~$k > 0$, let ${\bf G}^{(1)},\ldots, {\bf G}^{(k)}$ denote the subtrees that descend from the vertices in the first generation. We have using Proposition \ref{supmult} that
$$\log\mathbb{E}[\uptau_{{\bf G}_n}\mid {\bf G}_n] \geq \sum_{i=1}^{Z_1} \log \mathbb{E}[\uptau_{{\bf G}^{(i)}} \mid {\bf G}_n] - CZ_{1}\log|{\bf G}_n|,$$
with the understanding that the right-hand side is zero when $Z_{1} = 0$. 
Hence,
\begin{align*}\mathbb{E}[Y_n] &\geq \frac{m^{n-1}}{m^n}\mathbb{E}\left[\sum_{i=1}^{Z_{1}} \frac{\log \mathbb{E}[\uptau_{{\bf G}^{(i)}} \mid {\bf G}_n]}{m^{n-1}} \right] - C\cdot  \mathbb{E}\left[\frac{Z_{1} \log|{\bf G}_n|}{v_{n}}\right]\\
&= \mathbb{E}[Y_{n-1}] - C\cdot  \mathbb{E}\left[\frac{Z_{1} \log|{\bf G}_n|}{m^n}\right].
\end{align*}
Using Cauchy-Schwarz inequality and the fact that~$\log^2(k) < \sqrt{k}$ for~$k$ large enough,
\begin{align*}
\mathbb{E}\left[Z_{1} \log|{\bf G}_n|\right]& \leq \left(\mathbb{E}[Z_1^2] \cdot \mathbb{E}[\log^2 |{\bf G}_n| ]\right)^{1/2}\\
&\leq \mathbb{E}[Z_1^2]^{1/2} \cdot \left( \log^2(m^{n^2}) + \mathbb{E}\left[\sqrt{|{\bf G}_n|} \cdot \mathds{1}\{|{\bf G}_n| \geq m^{n^2} \}\right] \right)^{1/2}\\
&\leq \mathbb{E}[Z_1^2]^{1/2} \cdot \left( n^4 \log^2(m) +  \left( \mathbb{E}[|{\bf G}_n|] \cdot \mathbb{P}\left(|{\bf G}_n| \geq m^{n^2} \right) \right)^{1/2}   \right)^{1/2}\\
&\leq \mathbb{E}[Z_1^2]^{1/2} \cdot \left( n^4 \log^2(m) +  \left( v_n^2/m^{n^2} \right)^{1/2}   \right)^{1/2} \leq n^3,
\end{align*}
if $n$ is large enough. We thus have, for $n$ large enough,
\begin{equation}\label{eq:first_bound_Y}
\mathbb{E}[Y_n] \geq \mathbb{E}[Y_{n-1}] - \frac{n^4}{m^n}.
\end{equation}

Next, using \eqref{eq:conv_l2} and \eqref{eq:value_w}, we can choose $\varepsilon > 0$ such that, for $n$ 
large enough,~$$\mathbb{P}(\varepsilon m^n  < |{\bf G}_n| < m^n/\varepsilon) > \varepsilon.$$ 
Then, if $n$ is large, using Theorem \ref{thm:lower}, 
\begin{equation} \label{eq:using_our_result}\begin{split}\mathbb{E}[Y_n]& \geq \mathbb{E}\left[\frac{\log\mathbb{E}[\uptau_{{\bf G}_n}\mid {\bf G}_n]}{m^n} \cdot \mathds{1}\{\varepsilon m^n < |{\bf G}_n| < m^n/\varepsilon \}\right]\\[.2cm]
&\geq \frac{1}{m^n} \mathbb{E}\left[\log \left( \exp \left(\frac{\csv|{\bf G}_n|}{\log^2|{\bf G}_n|} \right)\right) \cdot \mathds{1}\{\varepsilon m^n < |{\bf G}_n| < m^n/\varepsilon \}\right]    \geq \frac{c}{n^2},\end{split}
\end{equation}
for some constant $c>0$. 
Now, \eqref{eq:first_bound_Y} and \eqref{eq:using_our_result} can be combined to first give $\widetilde{\gamma} > 0$, and then again to give~$\mathbb{E}[Y_n] \xrightarrow{n \to \infty} \widetilde{\gamma}$.
\end{proof}

Define next $$\gamma = \frac{m-1}{m} \cdot \widetilde{\gamma}.$$ 
\begin{claim} \label{cl:first_Y} For all $\varepsilon > 0$, 
$$\mathbb{P}(X_n < \gamma - \varepsilon\mid Z_n \neq 0) \xrightarrow{n \to \infty} 0\quad  \text{ and } \quad \mathbb{P}(X_n < \gamma - \varepsilon\mid \textnormal{Surv}) \xrightarrow{n \to \infty} 0 .$$
\end{claim}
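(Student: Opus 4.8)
The plan is to adapt the block-decomposition argument from the proof of Theorem~\ref{thm:perco} to the tree setting, using the recursive structure of the Galton-Watson tree in place of the spatial self-similarity of lattice boxes. The key point is that, conditionally on $\{Z_1 = k\}$, the tree ${\bf G}_n$ decomposes as the root together with $k$ independent copies ${\bf G}^{(1)},\ldots,{\bf G}^{(k)}$ of a Galton-Watson tree grown up to generation $n-1$, and more generally, iterating this, ${\bf G}_n$ contains $Z_j$ disjoint copies of a Galton-Watson tree grown up to generation $n-j$, for any fixed $j$. Applying Proposition~\ref{supmult} to these $Z_j$ disjoint subtrees gives
$$\log \mathbb{E}[\uptau_{{\bf G}_n}\mid {\bf G}_n] \geq \sum_{i=1}^{Z_j} \log \mathbb{E}[\uptau_{{\bf G}^{(i)}_{n-j}} \mid {\bf G}^{(i)}_{n-j}] - C(Z_j+1)\log|{\bf G}_n|,$$
so that, dividing by $|{\bf G}_n|$ and using $|{\bf G}_n|/v_n \to W_\infty$ together with $Z_j/m^j \to W_\infty^{(j)}$ (the analogous limit for the $j$-th generation, but note the subtrees' limits must be handled carefully), we expect
$$X_n \gtrsim \frac{m^j}{v_n} \cdot \frac{1}{Z_j}\sum_{i=1}^{Z_j} Y^{(i)}_{n-j} + o(1),$$
where $Y^{(i)}_{n-j}$ is the normalized log-expectation for the $i$-th subtree. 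Since $m^j Z_j^{-1} \to 1/W_\infty^{(j)}$ and $v_n \sim m^{n+1}/(m-1)$, the prefactor $m^j v_n^{-1} Z_j$ behaves like $(m-1)m^{j-n-1}Z_j = (m-1)m^{-n-1}\cdot m^j Z_j \to (m-1)/m \cdot W_\infty \cdot (\text{something})$; the bookkeeping here must be arranged so the constant $\gamma = \frac{m-1}{m}\widetilde\gamma$ emerges, which is the content of the claimed inequality.

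The steps I would carry out, in order: (1) Fix $\varepsilon > 0$ and choose $j$ large (depending on $\varepsilon$) so that $\mathbb{E}[Y_{n-j}]$ is within $\varepsilon$ of $\widetilde\gamma$ for all large $n$, which is possible by the preceding Claim. (2) Condition on the first $j$ generations; on the event $\{Z_j = \ell\}$ with $\ell \geq 1$, write the decomposition above with the $\ell$ i.i.d.\ subtrees. (3) Control the error term $C(Z_j+1)\log|{\bf G}_n| / |{\bf G}_n|$: using $|{\bf G}_n| \geq Z_n$ and $L^2$ bounds, this is $o(1)$ in probability on survival (one may need to restrict to an event where $|{\bf G}_n| \geq \varepsilon m^n$ and $Z_j \leq m^j/\varepsilon$, which has probability close to $\mathbb{P}(\textnormal{Surv})$ for $n,j$ appropriately chosen). (4) Apply a law of large numbers / concentration argument to $\frac{1}{\ell}\sum_{i=1}^{\ell} Y^{(i)}_{n-j}$: since the subtrees are i.i.d.\ and $Y^{(i)}_{n-j}$ are bounded (by Lemma~\ref{lem:unif_bound}, since degrees in a GW tree up to generation $n-j$ are a.s.\ finite but \emph{not} uniformly bounded — so here one must instead use the boundedness of $\mathbb{E}[\uptau\mid\cdot]$ via the total edge count, or truncate), the empirical mean is close to $\mathbb{E}[Y_{n-j}] \geq \widetilde\gamma - \varepsilon$ with high probability conditionally on $Z_j = \ell$ being large, and $Z_j \to \infty$ a.s.\ on survival. (5) Combine: $X_n \geq \frac{(m-1)Z_j}{m^{n+1}}\cdot(1+o(1))\cdot(\widetilde\gamma - \varepsilon) - o(1)$, and since $\frac{Z_j}{m^j}\to W_\infty$ and $\frac{|{\bf G}_n|}{v_n}\to W_\infty$ so that $\frac{(m-1)Z_j}{m^{n+1}} = \frac{Z_j/m^j}{v_n/m^{j+1}} \cdot \frac{(m-1)v_n}{m^{n+1}}\cdot\frac{1}{|{\bf G}_n|/v_n}\cdot\frac{|{\bf G}_n|}{v_n}$ — I would instead directly write $X_n = Y_n \cdot v_n/|{\bf G}_n|$ and relate $Y_n$ to $\widetilde\gamma$, then multiply by $v_n/|{\bf G}_n| \to 1/W_\infty$ combined with... actually the cleanest route is: show $Y_n \geq \frac{m^j Z_j}{m^n}\cdot \frac{1}{Z_j}\sum Y^{(i)}_{n-j} - o(1) \geq \frac{Z_j}{m^{n-j}}(\widetilde\gamma-\varepsilon) - o(1)$, note $\frac{Z_j}{m^{n-j}} = m^{2j-n}\cdot\frac{Z_j}{m^j}$... so rather than $Y_n$ converging, one gets $X_n = Y_n v_n/|{\bf G}_n|$ and $Y_n/$ something. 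Let me restate: $X_n = \log\mathbb{E}[\uptau_{{\bf G}_n}\mid{\bf G}_n]/|{\bf G}_n| \geq (\frac{1}{|{\bf G}_n|}\sum_{i=1}^{Z_j}\log\mathbb{E}[\uptau_{{\bf G}^{(i)}_{n-j}}\mid\cdot]) - o(1) = (\sum_i |{\bf G}^{(i)}_{n-j}|/|{\bf G}_n|)\cdot(\text{weighted avg of } X^{(i)}_{n-j}) - o(1)$, and $\sum_i|{\bf G}^{(i)}_{n-j}| = |{\bf G}_n| - |{\bf G}_j| \sim |{\bf G}_n|$, so effectively $X_n \gtrsim$ weighted average of $X^{(i)}_{n-j}$; but the $X^{(i)}$ have the wrong normalization relative to their own $v_{n-j}$. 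The correct identity to exploit is $\log\mathbb{E}[\uptau_{{\bf G}^{(i)}_{n-j}}\mid\cdot] = Y^{(i)}_{n-j} \cdot m^{n-j}$, so $\sum_i \log\mathbb{E}[\uptau_{{\bf G}^{(i)}_{n-j}}\mid\cdot] = m^{n-j}\sum_i Y^{(i)}_{n-j}$ and thus $X_n \geq \frac{m^{n-j}}{|{\bf G}_n|}\sum_{i=1}^{Z_j} Y^{(i)}_{n-j} - o(1)$. Now $\frac{m^{n-j}}{|{\bf G}_n|}\sum_{i=1}^{Z_j}Y^{(i)}_{n-j} = \frac{m^{n-j}Z_j}{|{\bf G}_n|}\cdot\overline{Y}$ where $\overline Y \to \widetilde\gamma$ by LLN on survival, and $\frac{m^{n-j}Z_j}{|{\bf G}_n|} = \frac{Z_j}{m^j}\cdot\frac{m^n}{|{\bf G}_n|} \to W_\infty \cdot \frac{1}{W_\infty}\cdot\frac{m-1}{m} = \frac{m-1}{m}$ using $|{\bf G}_n|/v_n \to W_\infty$ and $v_n \sim \frac{m^{n+1}}{m-1}$. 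Hence $X_n \gtrsim \frac{m-1}{m}\widetilde\gamma = \gamma$, which is exactly the claim. (6) Since all convergences above are in probability (with the LLN step requiring $Z_j \to \infty$, which holds a.s.\ on survival, and is where conditioning on $\{Z_n \neq 0\}$ vs.\ Surv must be reconciled — for $\{Z_n \neq 0\}$ one uses that $\mathbb{P}(Z_n \neq 0, \textnormal{Ext}) \to 0$, so the two conditionings are asymptotically equivalent), conclude.

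The main obstacle I anticipate is \textbf{handling the error term and the law-of-large-numbers step simultaneously with the conditioning}. Concretely: the bound from Proposition~\ref{supmult} carries a multiplicative correction $(2|{\bf G}_n|)^{-3Z_j}$, i.e.\ an additive error $\sim Z_j \log|{\bf G}_n|$ in the log; divided by $|{\bf G}_n| \asymp m^n$ this is $\asymp (Z_j/m^j)\log(m^n) m^{j-n}$ which $\to 0$ in probability but one needs enough uniform integrability (or a good event) to make this rigorous, and one must ensure $j$ is not taken so large relative to $n$ that $Z_j$ is comparable to $Z_n$. Also delicate: the subtree log-expectations $Y^{(i)}_{n-j}$ are i.i.d.\ but their boundedness comes only from Lemma~\ref{lem:unif_bound}, which bounds $\mathbb{E}[\uptau_G]$ by $\exp(|V| + 2\lambda|E|)$ — for a tree $|E| = |V|-1$, so $Y^{(i)}_{n-j} \leq (1+2\lambda)|{\bf G}^{(i)}_{n-j}|/m^{n-j}$, which is bounded in $L^2$ but not a.s.\ bounded; thus the LLN for $\frac1\ell\sum Y^{(i)}_{n-j}$ is an $L^2$/triangular-array statement and needs the $L^2$ convergence in~\eqref{eq:conv_l2} applied to the subtrees, plus care that the number of summands $Z_j$ is itself random and correlated with nothing once we condition on the first $j$ generations. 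I would isolate a "good event" $G_{n,j} = \{\varepsilon m^j < Z_j < m^j/\varepsilon\} \cap \{\varepsilon m^n < |{\bf G}_n|\}$ of probability close to $\mathbb{P}(\textnormal{Surv})$, work on it throughout, and push the remaining estimates through Chebyshev as in the lattice proof.
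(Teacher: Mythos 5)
Your proposal follows, in essence, the paper's own argument for Claim~\ref{cl:first_Y}: decompose ${\bf G}_n$ at an intermediate generation, apply Proposition~\ref{supmult} to the subtrees hanging from that generation, control the i.i.d.\ normalized log-expectations $Y^{(i)}$ by Chebyshev (they are indeed only $L^2$-bounded, via Lemma~\ref{lem:unif_bound} and \eqref{eq:conv_l2}, exactly as you observe), and identify the prefactor $m^{n-j}Z_j/|{\bf G}_n|$ with $(m-1)/m$ so that $\gamma=\frac{m-1}{m}\widetilde\gamma$ emerges. The one real difference is the choice of splitting generation: the paper splits at $n'=\lfloor n/2\rfloor$, growing with $n$, so that conditionally on $\{Z_{n'}\neq 0\}$ the number of summands $Z_{n'}\to\infty$ in probability (the Chebyshev bound $\mathbb{E}[Y_{n''}^2]/(\varepsilon^2 k)$ then vanishes in a single limit in $n$), the error term $cZ_{n'}\log|{\bf G}_n|/|{\bf G}_n|$ is trivially negligible, and the martingale limits $Z_{n'}/m^{n'}\to W_\infty$ and $|{\bf G}_n|/v_n\to W_\infty$ hold as genuine $n\to\infty$ limits and cancel exactly in the prefactor. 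You instead split at a fixed $j$; this can be made to work, but only as a double limit ($n\to\infty$ first, then $j\to\infty$, or equivalently on your good event $\{\varepsilon m^j<Z_j<m^j/\varepsilon\}$ whose conditional probability is close to one only for $j$ large): as literally written, the step ``$Z_j/m^j\to W_\infty$'' is not an $n\to\infty$ statement for fixed $j$, and for fixed $j$ the event that $Z_j$ is small retains positive conditional probability, so the law of large numbers over the $Z_j$ subtrees only becomes effective after sending $j\to\infty$. Since you flag precisely this issue and propose the correct remedy, this is an organizational wrinkle rather than a gap; the paper's choice $n'=\lfloor n/2\rfloor$ simply avoids the two-scale bookkeeping altogether.
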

\begin{proof} Since~$\mathbb{P}(\{Z_n \neq 0\}\backslash \text{Surv})\xrightarrow{n \to \infty} 0$, any of the above convergences follows from the other, so we will prove the first one. 
For $n \in \mathbb{N}$, we write
$$n' = \lfloor n/2\rfloor,\qquad n'' = n - n'.$$
On $Z_{n'} = k > 0$, let ${\bf G}^{(1)},\ldots, {\bf G}^{(k)}$ denote the subtrees that descend from the vertices in generation $n'$. 
On $\{Z_n \neq 0\}$, we have
\begin{equation}\begin{split}
X_n&\ge \frac{1}{|{\bf G}_n|} \sum_{i=1}^{Z_{n'}} \log \mathbb{E}[\uptau_{{\bf G}^{(i)}}\mid {\bf G}_n] - \frac{cZ_{n'}\log |{\bf G}_n|}{|{\bf G}_n|}\\[.2cm]
&=\frac{v_n}{|{\bf G}_n|} \cdot \frac{m^{n}}{v_n} \cdot  \frac{Z_{n'}}{m^{n'}} \cdot \frac{1}{Z_{n'}} \sum_{i=1}^{Z_{n'}} \frac{\log \mathbb{E}[\uptau_{{\bf G}^{(i)}}\mid {\bf G}_n]}{m^{n''}} - \frac{cZ_{n'}\log |{\bf G}_n|}{|{\bf G}_n|}.
\end{split}\label{eq:ineq_x}\end{equation}
Conditionally on $\{Z_{n'} = k\}$ for~$k > 0$, the random variables~$(m^{n''})^{-1}\cdot \log \mathbb{E}[\uptau_{{\bf G}^{(i)}} \mid {\bf G}_n]$, for~$i = 1,\ldots, k$, are independent and all have the same distribution as $Y_{n''}$. Hence, for any $\varepsilon > 0$,
$$\mathbb{P}\left(\left. \left|\frac{1}{k}\sum_{i=1}^k \frac{\log \mathbb{E}[\uptau_{{\bf G}^{(i)}}\mid {\bf G}_n]}{m^{n''}} - \mathbb{E}[Y_{n''}]  \right|> \varepsilon\;\right|\; Z_{n'} = k  \right) \leq \frac{\mathbb{E}[(Y_{n''})^2]}{\varepsilon^2 k}.$$
Since for any $k > 0$ we have $\mathbb{P}(Z_{n'} \leq k \mid Z_{n'} \neq 0) \xrightarrow{n' \to \infty} 0$, we obtain
$$\mathbb{P}\left(\left. \left|\frac{1}{Z_{n'}}\sum_{i=1}^{Z_{n'}} \frac{\log \mathbb{E}[\uptau_{{\bf G}^{(i)}}\mid {\bf G}_n]}{m^{n''}} - \mathbb{E}[Y_{n''}]  \right|> \varepsilon\;\right|\; Z_{n'} \neq 0  \right) \xrightarrow{n' \to \infty} 0.$$
Further using the fact that $\mathbb{E}[Y_n] \xrightarrow{n \to \infty} \widetilde{\gamma}$ and $\mathbb{P}(Z_n = 0\mid Z_{n'} \neq 0) \xrightarrow{n\to\infty} 0$, the above gives
\begin{equation}\mathbb{P}\left(\left. \left|\frac{1}{Z_{n'}}\sum_{i=1}^{Z_{n'}} \frac{\log \mathbb{E}[\uptau_{{\bf G}^{(i)}}\mid {\bf G}_n]}{m^{n''}} - \widetilde{\gamma} \; \right|> \varepsilon\;\right|\; Z_{n} \neq 0  \right) \xrightarrow{n' \to \infty} 0.\label{eq:ineq_x2}\end{equation}
The desired result now follows from \eqref{eq:ineq_x}, \eqref{eq:ineq_x2} and the facts that
\begin{equation*}
\frac{m^n}{v_n}\xrightarrow{n \to \infty} \frac{m-1}{m},\qquad  \frac{|{\bf G}_n|}{v_n} \xrightarrow[\text{a.s.}]{n\to\infty} {W_\infty}, \qquad \frac{Z_{n'}}{m^{n'}} \xrightarrow[\text{a.s.}]{n \to \infty} W_\infty,
\end{equation*}
and
$$\lim_{n \to \infty} \mathbb{P}(W_\infty = 0\mid Z_n \neq 0) = 0.$$
\end{proof}

\begin{claim}\label{claim3} For all $\varepsilon > 0$, $$ \mathbb{P}(X_n > \gamma + \varepsilon\mid Z_n \neq 0) \xrightarrow{n \to \infty} 0 \quad \text{and}\quad \mathbb{P}(X_n > \gamma + \varepsilon\mid\textnormal{Surv}) \xrightarrow{n \to \infty} 0.$$
\end{claim}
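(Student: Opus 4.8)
The plan is to deduce this upper‑tail bound from the lower‑tail bound of Claim~\ref{cl:first_Y} together with a first‑moment computation, in the same spirit as the ``general argument'' at the end of the proof of Theorem~\ref{thm:perco}. The new feature is that, because $|{\bf G}_n|$ is random, the relevant first moment must be taken against the weight $W_\infty$. Note that, in contrast with the lower bound, the upper bound cannot be obtained from a recursive decomposition: Proposition~\ref{supmult} only yields super‑multiplicativity of the expected extinction time, so the argument is necessarily indirect.

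First I would establish that $\E[X_n W_\infty]\to\gamma$. Recall $Y_n = X_n\cdot|{\bf G}_n|/m^n$, and that Lemma~\ref{lem:unif_bound}, applied to the tree ${\bf G}_n$ (for which $|E| = |{\bf G}_n|-1$), gives the deterministic bound $0\le X_n\le 1+2\lambda$. Writing $|{\bf G}_n|/m^n = (|{\bf G}_n|/v_n)\cdot(v_n/m^n)$ and using $v_n/m^n\to m/(m-1)$ together with the $L^2$ (hence $L^1$) convergence $|{\bf G}_n|/v_n\to W_\infty$ recorded in \eqref{eq:conv_l2}, we obtain $|{\bf G}_n|/m^n\to\frac{m}{m-1}W_\infty$ in $L^1$. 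Since $X_n$ is bounded,
$$\Big|\,\E[Y_n]-\tfrac{m}{m-1}\E[X_nW_\infty]\,\Big|\ \le\ (1+2\lambda)\,\E\Big|\,\tfrac{|{\bf G}_n|}{m^n}-\tfrac{m}{m-1}W_\infty\,\Big|\ \xrightarrow{n\to\infty}\ 0.$$
As $\E[Y_n]\to\widetilde\gamma$ by the first claim of this section, this yields $\E[X_nW_\infty]\to\frac{m-1}{m}\widetilde\gamma=\gamma$. I would also record that $\E[W_\infty]=1$ (from $Z_n/m^n\to W_\infty$ in $L^1$ and $\E[Z_n]=m^n$), so that $\E[(X_n-\gamma)W_\infty]=\E[X_nW_\infty]-\gamma\to 0$.

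Next I would show that $\E[(X_n-\gamma)^+W_\infty]\to 0$. Fix $\varepsilon>0$ and bound the negative part of $(X_n-\gamma)W_\infty$: on $\{X_n\ge\gamma-\varepsilon\}$ one has $(X_n-\gamma)^-\le\varepsilon$, while on $\{X_n<\gamma-\varepsilon\}$ one uses the crude bound $(X_n-\gamma)^-\le\gamma$. Since $W_\infty=0$ on $\mathrm{Ext}$ and, by Claim~\ref{cl:first_Y} together with $\mathrm{Surv}\subseteq\{Z_n\ne 0\}$, $\P(\{X_n<\gamma-\varepsilon\}\cap\mathrm{Surv})\to 0$, absolute continuity of the integral (here $W_\infty\in L^1$) gives $\E[(X_n-\gamma)^-W_\infty]\le\varepsilon+o(1)$. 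Combining this with the previous step and the identity $(X_n-\gamma)^+=(X_n-\gamma)+(X_n-\gamma)^-$ shows $\limsup_n\E[(X_n-\gamma)^+W_\infty]\le\varepsilon$ for every $\varepsilon>0$, hence $\E[(X_n-\gamma)^+W_\infty]\to 0$. Markov's inequality then gives, for each fixed $\varepsilon>0$, $\E\big[W_\infty\,\mathds{1}\{X_n>\gamma+\varepsilon\}\big]\le\varepsilon^{-1}\,\E[(X_n-\gamma)^+W_\infty]\to 0$.

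Finally I would convert this weighted bound into the desired conditional probability. For $\eta>0$, write $\{X_n>\gamma+\varepsilon\}\cap\mathrm{Surv}$ as the union of its intersections with $\{W_\infty\ge\eta\}$ and $\{W_\infty<\eta\}$. On the former, $\mathds{1}\le W_\infty/\eta$, so its probability is at most $\eta^{-1}\E[W_\infty\mathds{1}\{X_n>\gamma+\varepsilon\}]\to 0$ as $n\to\infty$; the latter has probability $\P(\mathrm{Surv})\,\P(W_\infty<\eta\mid\mathrm{Surv})$, which tends to $0$ as $\eta\downarrow 0$ by \eqref{eq:value_w}. Hence $\P(X_n>\gamma+\varepsilon\mid\mathrm{Surv})\to 0$, and the version conditioned on $\{Z_n\ne 0\}$ follows because $\P(\{Z_n\ne 0\}\setminus\mathrm{Surv})\to 0$. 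The main obstacle throughout is the random normalization: since $|{\bf G}_n|/m^n\to\frac{m}{m-1}W_\infty$ with $W_\infty$ possibly arbitrarily small on $\mathrm{Surv}$, concentration of $Y_n$ cannot be transferred directly to $X_n$, which is why one works with the $W_\infty$‑weighted first moment and relies on the $L^2$‑convergence in \eqref{eq:conv_l2} and the boundedness of $X_n$.
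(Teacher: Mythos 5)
Your proposal is correct, and all the ingredients you invoke are available in the paper: the boundedness $0\le X_n\le 1+2\lambda$ (Lemma \ref{lem:unif_bound} applied to the tree ${\bf G}_n$, plus the trivial fact that $\E[\uptau_{{\bf G}_n}\mid{\bf G}_n]\ge 1$, which you should state explicitly since the crude bound $(X_n-\gamma)^-\le\gamma$ relies on it), the $L^1$-convergence $|{\bf G}_n|/m^n\to\frac{m}{m-1}W_\infty$ with $\E[W_\infty]=1$, the convergence $\E[Y_n]\to\widetilde\gamma$, Claim \ref{cl:first_Y}, and $W_\infty>0$ a.s.\ on $\mathrm{Surv}$. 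The route differs from the paper's in its organization rather than in its substance. The paper argues by contradiction along a subsequence: it keeps the random weight $|{\bf G}_{n_k}|/v_{n_k}$, partitions $\mathrm{Surv}$ into the three events $B_{k,1},B_{k,2},B_{k,3}$ according to the value of $X_{n_k}$, shows the weights $q_{k,0},q_{k,1}$ vanish while $q_{k,3}$ stays bounded below by $\varepsilon'=\inf_B\E[W_\infty\mathds 1_B]>0$, and concludes that a non-vanishing upper tail would force $\liminf\E[Y_{n_k}]>\widetilde\gamma$. You instead pass directly to the limiting weight $W_\infty$ (justified by boundedness of $X_n$ and $L^1$-convergence), prove $\E[X_nW_\infty]\to\gamma$, control the negative part via Claim \ref{cl:first_Y} and absolute continuity, and finish with Markov's inequality and the non-atomicity of $W_\infty$ at $0$ on $\mathrm{Surv}$. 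What your version buys is a direct (no contradiction, no subsequence) and arguably cleaner argument that avoids the discretization into $B_{k,j}$ and the infimum quantity $\varepsilon'$; what it costs is the explicit use of the uniform bound on $X_n$, which the paper's proof of this claim does not need (it only uses $X_n\ge0$ and the lower bounds on the events $B_{k,2},B_{k,3}$). Both proofs ultimately rest on the same first-moment mechanism, so I consider your proof a valid alternative with no gaps.
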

\begin{proof} We will prove the second convergence. Assume by contradiction that there exists $\varepsilon > 0$ such that, for some increasing sequence $(n_k)$ with $n_k \to \infty$,
\begin{equation}\label{eq:for_contra}
\mathbb{P}(X_{n_k} > \gamma + \varepsilon \mid \text{Surv}) > \varepsilon.
\end{equation}
Fix a small $\delta > 0$ to be chosen later. Define the events
\begin{align*}
&B_{k,1} = \text{Surv}\cap \{ X_{n_k} < \gamma - \delta\},\\
&B_{k,2} = \text{Surv}\cap\{ \gamma - \delta \leq X_{n_k} \leq \gamma + \varepsilon\},\\
&B_{k,3} = \text{Surv}\cap \{ X_{n_k} > \gamma + \varepsilon\}.
\end{align*}
Also define
$$q_{k,0} = \mathbb{E}\left[\frac{|{\bf G}_{n_k}|}{v_{n_k}} \cdot \mathds{1}_\text{Ext}\right],\qquad q_{k,j} = \mathbb{E}\left[\frac{|{\bf G}_{n_k}|}{v_{n_k}} \cdot  \mathds{1}_{B_{k,j}}\right],\hspace{.3cm} j \in \{1,2,3\}.$$
Note that
\begin{equation}
\label{eq:sum_q} q_{k,0} + q_{k,1} +q_{k,2} +q_{k,3} =1.
\end{equation}
By \eqref{eq:conv_l2} and  \eqref{eq:value_w} we have
\begin{equation}\label{eq:bound_ext1}
q_{k,0} \leq \mathbb{E}\left[\left| \frac{|{\bf G}_{n_k}|}{v_{n_k}} - W_\infty\right|\right] + \mathbb{E}\left[W_\infty \cdot \mathds{1}_\text{Ext}\right] \xrightarrow{k \to \infty} 0.
\end{equation}
Similarly, we bound
\begin{equation}\label{eq:bound_ext2}
q_{k,1} \leq \mathbb{E}\left[\left| \frac{|{\bf G}_{n_k}|}{v_{n_k}} - W_\infty\right|\right] + \mathbb{E}\left[W_\infty \cdot \mathds{1}_{B_{k,1}}\right] \xrightarrow{k \to \infty} 0;
\end{equation}
the convergence follows from the fact that $\mathbb{P}(B_{k,1}) \xrightarrow{k \to \infty} 0$, by Claim \ref{cl:first_Y}. Next,
\begin{align*}
q_{k,3} \geq  \mathbb{E}\left[W_\infty\cdot \mathds{1}_{B_{k,3}} \right] -  \mathbb{E}\left[\left| \frac{|{\bf G}_{n_k}|}{v_{n_k}} - W_\infty\right|\right].
\end{align*}
Noting that \eqref{eq:for_contra} gives $\mathbb{P}(B_{k,3}) > \varepsilon$ for every $k$, we have
\begin{equation}\label{eq:bound_ext3}
\liminf_{k\to\infty} q_{k,3} \geq \varepsilon' := \inf_{B}\mathbb{E}[W_\infty\cdot \mathds{1}_B],
\end{equation}
where the infimum ranges over all sets $B$ with
$$B \in \sigma\left(\{Z_n:n \geq 0\}\right),\quad B \subseteq \text{Surv},\quad \mathbb{P}(B) > \varepsilon.$$
Since the law of $W_\infty$ conditioned on $\text{Surv}$ has no atom at zero, we obtain $\varepsilon' > 0$. 

We now put these estimates together. We start computing
\begin{align}
\nonumber\mathbb{E}[Y_{n_k}] &=\frac{v_{n_k}}{m^{n_k}}\cdot \left( \mathbb{E}\left[X_{n_k} \cdot \frac{|{\bf G}_{n_k}|}{v_{n_k}} \cdot \mathds{1}_\text{Ext} \right] + \sum_{j=1}^3 \mathbb{E}\left[ X_{n_k}\cdot \frac{|{\bf G}_{n_k}|}{v_{n_k}} \cdot \mathds{1}_{B_{k,j}}\right]\right)\\[.2cm]
\nonumber &\geq \frac{v_{n_k}}{m^{n_k}}\cdot\left[(\gamma - \delta)\cdot q_{k,2} + (\gamma + \varepsilon)\cdot q_{k,3}\right]\\[.2cm]
\label{eq:gamma_4} &\stackrel{\eqref{eq:sum_q}}{=} \frac{v_{n_k}}{m^{n_k}}\cdot\left[\gamma +q_{k,3}\cdot \varepsilon +q_{k,3}\cdot  \delta - \delta  + (q_{k,0} + q_{k,1}) (\delta  - \gamma)\right].
\end{align}
Using  \eqref{eq:bound_ext1}, \eqref{eq:bound_ext2}, \eqref{eq:bound_ext3} and the convergence $v_n/m^n \to m/(m-1)$, by  first choosing~$\delta$ small enough and then assuming $k$ is large enough, the expression in \eqref{eq:gamma_4} is larger than $\frac{m}{m-1}(\gamma + \varepsilon' \varepsilon/2)$. This gives
$$\liminf_{k \to \infty} \mathbb{E}[Y_{n_k}] > \widetilde{\gamma},$$
contradicting $\mathbb{E}[Y_n] \xrightarrow{n \to \infty} \widetilde{\gamma}$.
\end{proof}

The proof of Theorem~\ref{thm:thm} in the case of Galton-Watson trees now follows from Claims~\ref{cl:first_Y} and~\ref{claim3} by the same estimates as in the last paragraph of Section~\ref{ss:general}. Letting~$\bar{\mathbb{P}}(\cdot)$ denote either~${\mathbb{P}}(\cdot\mid\text{Surv})$ or~${\mathbb{P}}(\cdot\mid Z_n \neq 0)$ (and similarly for~$\bar{\mathbb{E}}$), we have
\begin{align*}
&\bar{\mathbb{P}}\left( \frac{1}{|{\bf G}_n|} \log \uptau_{{\bf G}_n} > \gamma + \varepsilon\right)\\[.2cm]&\leq \bar{\mathbb{P}}\left(X_n > \gamma + \frac{\varepsilon}{2}\right) + \bar{\mathbb{E}}\left[\mathbb{P}\left(\uptau_{{\bf G}_n} > e^{(\gamma + \varepsilon)|{\bf G}_n|}\mid {\bf G}_n\right)\cdot \mathds{1}{\left\{X_n \le \gamma + \frac{\varepsilon}{2} \right\}} \right]\\[.2cm]
&\leq \bar{\mathbb{P}}\left(X_n > \gamma + \frac{\varepsilon}{2}\right) + \bar{\mathbb{E}}\left[e^{-(\gamma + \varepsilon)|{\bf G}_n|}\cdot \mathbb{E}\left[\uptau_{{\bf G}_n}  \mid {\bf G}_n \right] \cdot \mathds{1}{\left\{  \mathbb{E}[\uptau_{{\bf G}_n} \mid {\bf G}_n] \leq e^{(\gamma + \frac{\varepsilon}{2})|{\bf G}_n|} \right\}} \right] \xrightarrow{n \to \infty} 0
\end{align*}
and
\begin{align*}
&\bar{\mathbb{P}}\left( \frac{1}{|{\bf G}_n|} \log \uptau_{{\bf G}_n} < \gamma - \varepsilon\right)\\[.2cm]&\leq \bar{\mathbb{P}}\left(X_n < \gamma - \frac{\varepsilon}{2}\right) + \bar{\mathbb{E}}\left[\mathbb{P}\left(\uptau_{{\bf G}_n} < e^{(\gamma - \varepsilon)|{\bf G}_n|}\mid {\bf G}_n\right)\cdot \mathds{1}{\left\{X_n \ge \gamma - \frac{\varepsilon}{2} \right\}} \right]\\[.2cm]
&\leq \bar{\mathbb{P}}\left(X_n < \gamma - \frac{\varepsilon}{2}\right) + \bar{\mathbb{E}}\left[e^{(\gamma - \varepsilon)|{\bf G}_n|}\cdot \mathbb{E}\left[\uptau_{{\bf G}_n}  \mid {\bf G}_n \right]^{-1} \cdot \mathds{1}{\left\{  \mathbb{E}[\uptau_{{\bf G}_n} \mid {\bf G}_n] \geq e^{(\gamma - \frac{\varepsilon}{2})|{\bf G}_n|} \right\}} \right] \xrightarrow{n \to \infty} 0.
\end{align*}
$\;$\\[-3mm]
\begin{center} {\textbf{Acknowledgements}} \end{center}$\;$\\[-8mm]
We would like to thank Bal\'azs R\'ath for directing us to several references on the models we study, leading to a wider applicability of our results. We would also like to thank Tobias M{\"u}ller for helpful discussions and references on the random geometric graph.

\end{document}